\numberwithin{theorem}{section}
\newcommand{\n}{\mathbf{n}}
\newcommand{\p}{\mathbf{p}}
\newcommand{\q}{\mathbf{q}}
\renewcommand{\r}{\mathbf{r}}
\newcommand{\E}{\mathcal{E}}
\newcommand{\F}{\mathcal{F}}
\newcommand{\N}{\mathcal{N}}
\newcommand{\T}{\mathcal{T}}
\newcommand{\hq}{\hat{\mathbf{q}}}
\newcommand{\hr}{\hat{\mathbf{r}}}
\newcommand{\br}{\bar{\mathbf{r}}}
\newcommand{\tc}{\tilde{c}}
\newcommand{\sumk}{\sum_{k=1}^{N}}
\let\div\relax
\DeclareMathOperator{\div}{div}
\DeclareMathOperator{\supp}{supp}
\DeclareMathOperator*{\argmin}{\arg\min}
\title{Pseudo-linear Convergence of an Additive Schwarz Method for Dual Total Variation Minimization\thanks{Submitted (in revised form) to arXiv October 14, 2020.
\funding{This research was supported by Basic Science Research Program through the National Research Foundation of Korea~(NRF) funded by the Ministry of Education~(2019R1A6A3A01092549).}
}}
\author{Jongho Park\thanks{Department of Mathematical Sciences, KAIST, Daejeon 34141, Korea 
  (\email{jongho.park@kaist.ac.kr}, \url{https://sites.google.com/view/jonghopark}).}}
\begin{document}

\maketitle

\begin{abstract}
In this paper, we propose an overlapping additive Schwarz method for total variation minimization based on a dual formulation.
The $O(1/n)$-energy convergence of the proposed method is proven, where $n$ is the number of iterations.
In addition, we introduce an interesting convergence property called \textit{pseudo-linear convergence} of the proposed method; the energy of the proposed method decreases as fast as linearly convergent algorithms until it reaches a particular value.
It is shown that such the particular value depends on the overlapping width $\delta$, and the proposed method becomes as efficient as linearly convergent algorithms if $\delta$ is large.
As the latest domain decomposition methods for total variation minimization are sublinearly convergent, the proposed method outperforms them in the sense of the energy decay.
Numerical experiments which support our theoretical results are provided.
\end{abstract}

\begin{keywords}
domain decomposition method, additive Schwarz method, total variation minimization, Rudin--Osher--Fatemi model, convergence rate
\end{keywords}

\begin{AMS}
65N55, 65Y05, 65K15, 68U10
\end{AMS}

% Section: Introduction
\section{Introduction}
\label{Sec:Introduction}
This paper is concerned with numerical solutions of total variation minimization by additive Schwarz methods as overlapping domain decomposition methods~(DDMs).
Total variation minimization was introduced first by Rudin, Osher, and Fatemi~\cite{ROF:1992}, and it has become one of the fundamental problems in mathematical imaging.
Let $\Omega \subset \mathbb{R}^2$ be a bounded rectangular domain.
The model total variation minimization problem on $\Omega$ is given by
\begin{equation}
\label{model}
\min_{u \in BV (\Omega )} \left\{ F(u) + TV_{\Omega}(u) \right\},
\end{equation}
where $F(u)$ is a convex function, $TV_{\Omega}(u)$ is the total variation of $u$ on $\Omega$ defined by
\begin{equation*}
TV_{\Omega}(u) = \sup \left\{ \int_{\Omega} u \div \p \, dx : \p \in (C_0^1 (\Omega))^2 \textrm{ such that } |\p(x)| \leq 1 \textrm{ for almost all } x \in \Omega \right\}
\end{equation*}
with $|\p (x)| = \sqrt{ p_1(x)^2 + p_2(x)^2 }$,
and $BV(\Omega)$ is the space of functions in $L^1 (\Omega)$ with finite total variation.
The equation~\eqref{model} contains extensive range of problems arising in mathematical imaging.
For example, if we set $F(u) = \frac{\lambda}{2} \int_{\Omega} (u-f)^2 \,dx$ in~\eqref{model} for $\lambda > 0$ and $f \in L^2 (\Omega )$, we get the celebrated Rudin--Osher--Fatemi~(ROF) model~\cite{ROF:1992}:
\begin{equation}
\label{ROF}
\min_{u \in BV (\Omega )} \left\{ \frac{\lambda}{2} \int_{\Omega} (u-f)^2 \,dx + TV_{\Omega}(u) \right\}.
\end{equation}
In the perspective of image processing, a solution $u$ of~\eqref{ROF} is a denoised image obtained from the noisy image $f$.
A more complex example of~\eqref{model} is the $TV$-$H^{-1}$ model~\cite{BHS:2009,OSV:2003,Schonlieb:2009}:
\begin{equation}
\label{K}
\min_{u \in BV(\Omega)} \left\{ \frac{\lambda}{2} \int_{\Omega} \left| \nabla (- \Delta )^{-1} (u-f) \right|^2 \,dx + TV_{\Omega}(u) \right\} ,
\end{equation}
where $\lambda > 0$, and $(-\Delta)^{-1}$ denotes the inverse of the negative Laplacian incorporated with the homogeneous Dirichlet boundary condition.
Since~\eqref{K} is known to have many good properties of higher order variational models for imaging such as the smooth connection of shape, it has various applications in advanced imaging problems such as image decomposition~\cite{OSV:2003} and image inpainting~\cite{BHS:2009}.
One may refer to~\cite{CP:2016} for other various examples of~\eqref{model}.

% Table: Summary of difficulties
\begin{table} \centering
\caption{Difficulties on designing DDMs for~\eqref{ROF} and~\eqref{dual_ROF}.}
\resizebox{1.0\textwidth}{!}{
\begin{tabular}{ c | c | c }
problem & obstacle & $\mathrm{grad}$-$\div$\\
\hline
energy
& $\displaystyle \frac{1}{2}\int_{\Omega} |\nabla u|^2 \,dx - \int_{\Omega} fu \,dx$
& $\displaystyle \frac{1}{2}\int_{\Omega} [(\div \p )^2 + |\p|^2] \,dx - \int_{\Omega} \mathbf{f} \cdot \p \,dx$ \\
solution & scalar-valued, $H_0^1 (\Omega)$ & \textbf{vector-valued}, $H_0 (\div ; \Omega)$ \\
constraint & \textbf{yes} & no \\
strong convexity & yes & yes \\
smoothness & yes & yes \\
separability & yes & yes \\
Schwarz methods & \cite{BTW:2003,Tai:2003} & \cite{Oh:2013,TW:2005} \\
\begin{tabular}{c}convergence to \\ a global minimizer\end{tabular} & linear & linear \\

\hline \hline
problem & ROF~\eqref{ROF} & dual ROF~\eqref{dual_ROF} \\
\hline
energy
& $\displaystyle \frac{\lambda}{2} \int_{\Omega} (u-f)^2 \,dx + TV_{\Omega}(u)$ 
& $\displaystyle \frac{1}{2\lambda} \int_{\Omega} (\div \p + \lambda f)^2 \,dx$ \\
solution & scalar-valued, $BV(\Omega)$ & \textbf{vector-valued}, $H_0 (\div ; \Omega )$ \\
constraint & no & \textbf{yes} \\
strong convexity & yes & \textbf{no} \\
smoothness & \textbf{no} & yes \\
separability & \textbf{no} & yes \\
Schwarz methods & \begin{tabular}{c}primal decomposition:~\cite{FLS:2010,FS:2009} \\ dual decomposition:~\cite{LG:2019,LN:2017}\end{tabular} & \cite{CTWY:2015,HL:2015} \\
\begin{tabular}{c}convergence to \\ a global minimizer\end{tabular} & \begin{tabular}{c} primal decomposition: \textbf{not guaranteed}~\cite{LN:2017} \\ dual decomposition: convergent~\cite{LG:2019,LN:2017} \end{tabular} & \textbf{sublinear}~\cite{CTWY:2015,LP:2019b} \\
\end{tabular}
}
\label{Table:difficult}
\end{table}

In view of designing DDMs, there lie several difficulties on the total variation term in~\eqref{model}.
The total variation is nonsmooth, i.e., it has no gradient so that a careful consideration is required to solve~\eqref{model}.
Furthermore, since it measures the jumps of a function across edges, it is nonseparable in the sense that
\begin{equation*}
TV_{\Omega}(u) \neq \sum_{i=1}^{N} TV_{\Omega_i} (u)
\end{equation*}
for a nonoverlapping partition $\{ \Omega_i \}_{i=1}^N$ of $\Omega$ in general.
Due to those characteristics, it is challenging to design Schwarz methods for~\eqref{model} that converge to a correct solution.
Indeed, it was shown in~\cite{LN:2017} that Schwarz methods for~\eqref{ROF}, a special case of~\eqref{model}, introduced in~\cite{FLS:2010,FS:2009} may not converge to a correct minimizer.
We also point out that the Schwarz framework for nonsmooth convex optimization proposed in~\cite{BK:2012} does not apply to~\eqref{model} since $TV_{\Omega}(u)$ does not satisfy the condition~\cite[equation~(7)]{BK:2012}; see~\cite[Claim~6.1]{LN:2017}.

Instead of~\eqref{model}, one may consider a Fenchel--Rockafellar dual formulation~(see, e.g.,~\cite{CP:2016}) of~\eqref{model}, which is given by
\begin{equation*}
\min_{\p \in (C_0^1 (\Omega))^2} F^* (\div \p )
\quad \textrm{ subject to } |\p(x)| \leq 1 \quad \forall x \in \Omega ,
\end{equation*}
or an alternative formulation
\begin{equation}
\label{dual_model}
\min_{\p \in H_0 (\div; \Omega)} F^* (\div \p )
\quad \textrm{ subject to } |\p(x)| \leq 1 \quad \forall x \in \Omega,
\end{equation}
in which the solution space is replaced by an appropriate Hilbert space
\begin{equation*}
H_0 (\div ; \Omega) = \left\{ \p \in (L^2 (\Omega))^2 : \div \p \in L^2 (\Omega) \textrm{ and } \p \cdot \n = 0 \textrm{ on } \partial \Omega \right\},
\end{equation*}
where $F^*$ is the Legendre--Fenchel conjugate of $F$ and $\mathbf{n}$ is the outer normal to $\partial \Omega$.
In particular, a dual formulation for~\eqref{ROF} is given by
\begin{equation}
\label{dual_ROF}
\min_{\p \in H_0 (\div; \Omega)} \frac{1}{2\lambda} \int_{\Omega} (\div \p + \lambda f)^2 \,dx
\quad \textrm{ subject to } |\p(x)| \leq 1 \quad \forall x \in \Omega .
\end{equation}
The above dual formulation was first considered in~\cite{Chambolle:2004}.
If one has a solution of the dual problem~\eqref{dual_model}, then a solution of the primal problem~\eqref{model} can be easily obtained by the primal-dual relation; see~\cite{CP:2016}.
One readily see that~\eqref{dual_model} is a constrained minimization problem.
We note that the energy functional of~\eqref{dual_model} is not strongly convex.
Even for~\eqref{ROF} where $F$ is smooth, the energy functional of its dual problem~\eqref{dual_ROF} is not strongly convex due to the $\div$ operator therein.
Hence, Schwarz methods proposed in~\cite{BTW:2003,Tai:2003} for constrained optimization~(in particular, the obstacle problem) are valid for neither~\eqref{dual_model} nor~\eqref{dual_ROF}.
Moreover,~\eqref{dual_model} is a vector-valued problem related to the $\div$ operator; it is usually more difficult to design DDMs for vector-valued problems than for scalar-valued ones because of the huge null space of the $\div$ operator~\cite{Oh:2013,TW:2005}.
The abovementioned difficulties on~\eqref{ROF} and~\eqref{dual_ROF}, special cases of~\eqref{model} and~\eqref{dual_model}, respectively, are summarized in Table~\ref{Table:difficult} with comparisons with some related problems in structural mechanics: the obstacle problem and the $\mathrm{grad}$-$\div$ problem.

Despite of such difficulties, several successful Schwarz methods for~\eqref{dual_ROF} have been developed~\cite{CTWY:2015,HL:2015,LG:2019,LP:2019b}.
In~\cite{HL:2015}, subspace correction methods for~\eqref{dual_ROF} based on a nonoverlapping domain decomposition were proposed.
Since then, the $O(1/n)$-energy convergence of overlapping Schwarz methods for~\eqref{dual_ROF} in a continuous setting was derived in~\cite{CTWY:2015}, where $n$ is the number of iterations.
In~\cite{LP:2019b}, it was shown that the methods proposed in~\cite{HL:2015} are also $O(1/n)$-convergent.
In addition, an $O(1/n^2)$-convergent additive method was designed using an idea of pre-relaxation.
Inspired by the dual problem~\eqref{dual_ROF}, Schwarz methods for~\eqref{ROF} based dual decomposition were considered in~\cite{LG:2019,LN:2017}.
Recently, several iterative substructuring methods for more general problems of the form~\eqref{dual_model} were considered~\cite{LPP:2019,LP:2019a}.

In this paper, we propose an additive Schwarz method for~\eqref{dual_model} based on an overlapping domain decomposition.
While the existing methods in~\cite{HL:2015,LP:2019b} for~\eqref{dual_ROF} are based on finite difference discretizations, the proposed method is based on finite element discretizations which were proposed in~\cite{HHSVW:2019,LPP:2019,LP:2019a} recently.
Compared to the methods in~\cite{CTWY:2015}, the proposed method has an advantage that it does not depend on either a particular function decomposition or a constraint decomposition.
We prove that the proposed method is $O(1/n)$-convergent similarly to the existing methods in~\cite{CTWY:2015,LP:2019b}.
In addition, we present explicitly the dependency of the convergence rate on the condition number of $F$.
We investigate another interesting convergence property of the proposed method, which we call \textit{pseudo-linear} convergence.
The precise definition of pseudo-linear convergence is given as follows.

% Definition: Pseudo-linear convergence
\begin{definition}
\label{Def:pseudo}
A sequence $\{ a_n \}_{n \geq 0}$ of positive real numbers is said to \textit{converge pseudo-linearly to 0 at rate $\gamma$ with threshold $\epsilon$} if $a_n$ converges to 0 as $n$ tends to $\infty$ and there exist constants $0 < \gamma < 1$, $c > 0$, and $\epsilon > 0$ such that
\begin{equation*}
a_n \leq \gamma^n c + \epsilon \quad \forall n \geq 0.
\end{equation*}
\end{definition}

\noindent
Note that the above definition reduces to the ordinary linear convergence if $\epsilon = 0$.

With a suitable overlapping domain decomposition, it is shown that proposed method is pseudo-linearly convergent with threshold $O( |\Omega| /\delta^2 )$, where $\delta$ is the overlapping width parameter of the domain decomposition.
Therefore, the proposed method is expected to converge to a minimizer much more rapidly than other sublinearly convergent methods if $\delta$ is large.
We provide numerical experiments which ensure this expectation.

The rest of this paper is organized as follows.
In Section~\ref{Sec:General}, we present finite element discretizations for dual total variation minimization and an abstract space decomposition setting.
An abstract additive Schwarz method is introduced in Section~\ref{Sec:Schwarz} with its convergence results.
In Section~\ref{Sec:DD}, overlapping domain decomposition settings for the proposed method are presented.
Applications of the proposed method to various imaging problems of the form~\eqref{model} are provided in Section~\ref{Sec:Applications}.
We conclude the paper with some remarks in Section~\ref{Sec:Conclusion}.

% Section: General setting
\section{General setting}
\label{Sec:General}
First, we briefly review finite element discretizations proposed in~\cite{LPP:2019,LP:2019a} for~\eqref{dual_model}.
All the results in this paper can naturally be generalized to polygonal domains with quasi-uniform meshes, but we restrict our discussion to rectangular domains with uniform meshes for simplicity; one may refer~\cite{HHSVW:2019} for more general finite element discretizations.

Each pixel in an image is regarded as a square finite element whose side length equals $h$.
Then the image domain composed of $m_1 \times m_2$ pixels becomes a rectangular region $(0, m_1 h) \times (0, m_2 h)$ in $\mathbb{R}^2$.
Let $\T_h$ be the collection of all elements in $\Omega$, and $\E_h$ be the collection of all interior element edges.
The lowest order Raviart--Thomas finite element space~\cite{RT:1977} on $\Omega$ with the homogeneous essential boundary condition is given by 
\begin{equation*}
Y_h = \left\{ \p \in H_0 (\div ; \Omega) : \p|_{T} \in \mathcal{RT}_0 (T) \quad \forall T \in \T_h \right\},
\end{equation*}
where $\mathcal{RT}_0 (T)$ is the collection of all vector fields $\p$:~$T \rightarrow \mathbb{R}^2$ of the form
\begin{equation}
\label{RT}
\p (x_1, x_2) = \begin{bmatrix} a_1 + b_1 x_1 \\ a_2 + b_2 x_2\end{bmatrix}.
\end{equation}
The degrees of freedom for $Y_h$ are given by the average values of the normal components over the element edges.
We denote the degree of freedom of $\p \in Y_h$ associated to an edge $e \in \E_h$ by $(\p)_e$, i.e.,
\begin{equation*}
(\p)_e = \frac{1}{|e|} \int_{e} \p \cdot \n_e \, ds,
\end{equation*}
where $\n_e$ is a unit normal to $e$.
We define the space $X_h$ by
\begin{equation*}
X_h = \left\{ u \in L^2 (\Omega) : u|_T \textrm{ is constant} \quad \forall T \in \T_h \right\}.
\end{equation*}
Then it is clear that $\div \p \in X_h$ for $\p \in Y_h$.
Obviously, the degrees of freedom for $X_h$ are the values on the elements; for $u \in X_h$, $T \in \T_h$, and $x_T \in T$, we write
\begin{equation*}
(u)_T = u(x_T).
\end{equation*}

Let $\Pi_h$:~$H_0 (\div ; \Omega ) \rightarrow Y_h$ be the nodal interpolation operator, i.e., it satisfies
\begin{equation*}
(\Pi_h \p)_e = \frac{1}{|e|} \int_e \p \cdot \n_e \,ds, \quad \p \in H_0 (\div ; \Omega), e \in \E_h.
\end{equation*}
Then the following estimate holds~\cite[Lemma~5.1]{Oh:2013}.

% Lemma: Interpolation
\begin{lemma}
\label{Lem:interpolation}
Let $\p \in Y_h$ and $\theta$ be any continuous, piecewise linear, scalar function supported in $S \subset \Omega$.
Then, there exists a constant $c > 0$ independent of $|\Omega |$ and $h$ such that
\begin{equation*}
\int_S \left[\div (\Pi_h (\theta \p)) \right]^2 \,dx \leq c \int_S \left[ \div (\theta \p ) \right]^2 \,dx .
\end{equation*} 
\end{lemma}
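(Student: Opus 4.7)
The plan is to reduce the inequality to a purely elementwise estimate, exploiting the defining property of the lowest-order Raviart--Thomas space: on each $T \in \T_h$, every function in $\mathcal{RT}_0(T)$ has constant divergence equal to its element mean. This converts the $L^2$ bound on $\div(\Pi_h(\theta\p))$ into a Cauchy--Schwarz estimate and will yield a constant $c$ independent of $h$ and $|\Omega|$ for free.

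First I would fix $T \in \T_h$ and rewrite the (constant) value of $\div(\Pi_h(\theta\p))$ on $T$ using the divergence theorem together with the definition of the degrees of freedom of $\Pi_h$. Since $(\Pi_h(\theta\p))_e$ is, by construction, the average normal flux of $\theta\p$ across each edge $e$ of $T$, a short computation should give
\begin{equation*}
\div(\Pi_h(\theta\p))|_T = \frac{1}{|T|}\int_{\partial T} \Pi_h(\theta\p)\cdot\n_T\,ds = \frac{1}{|T|}\int_{\partial T} \theta\p\cdot\n_T\,ds = \frac{1}{|T|}\int_T \div(\theta\p)\,dx.
\end{equation*}
The last equality is the elementwise divergence theorem applied to $\theta\p$; this is legitimate because $\p\cdot\n_e$ is continuous across each interior edge for $\p\in Y_h$ and $\theta$ is continuous, so $\theta\p\in H_0(\div;\Omega)$ and its distributional divergence coincides with the classical one on $T$.

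Next, squaring this identity and applying Cauchy--Schwarz to the right-hand side gives
\begin{equation*}
\int_T [\div(\Pi_h(\theta\p))]^2\,dx = \frac{1}{|T|}\left(\int_T \div(\theta\p)\,dx\right)^2 \leq \int_T [\div(\theta\p)]^2\,dx.
\end{equation*}
Summing over the elements $T$ meeting $\supp\theta$---which all lie in $S$ whenever $S$ is mesh-aligned, as in the overlapping domain decomposition setups of Section~\ref{Sec:DD}---then produces the required global bound, in fact with the sharp constant $c=1$.

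The only mild obstacle I anticipate is verifying that $\theta\p$ has a genuine $L^2(\Omega)$ divergence rather than merely piecewise smooth derivatives; this is where the continuity of $\theta$ and the matching of normal traces of $\p\in Y_h$ across interior edges come in. Because the resulting estimate is strictly local and scale-invariant, the constant $c$ is automatically independent of $h$ and $|\Omega|$, matching the statement.
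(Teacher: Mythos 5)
Your proof is correct, and it is actually more informative than the paper, which does not prove this lemma at all but simply cites Lemma~5.1 of the reference \cite{Oh:2013}. What you have rediscovered is the commuting-diagram property of the lowest-order Raviart--Thomas interpolation, $\div \circ \Pi_h = P_h \circ \div$ with $P_h$ the $L^2$-orthogonal projection onto the piecewise constants $X_h$: your elementwise identity says exactly that $\div(\Pi_h(\theta\p))|_T$ is the mean value of $\div(\theta\p)$ over $T$, and your Cauchy--Schwarz step is precisely the statement that $P_h$ is an $L^2$-contraction, which yields the sharp constant $c=1$ rather than an unspecified $c$ independent of $h$ and $|\Omega|$. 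Two small remarks. First, the elementwise divergence theorem remains legitimate even when the kinks of $\theta$ cut through an element interior: $\theta\p$ is Lipschitz on each $T$ (since $\theta$ is globally Lipschitz and $\p|_T$ is polynomial), which is all the regularity needed, so you do not have to assume $\theta$ is piecewise linear with respect to $\T_h$ itself --- a point worth noting because the partition-of-unity functions in Section~\ref{Sec:DD} need not have their breaklines aligned with every element. Second, your mesh-alignment caveat about $S$ is unnecessary: summing your elementwise bound over \emph{all} of $\T_h$ gives $\int_\Omega [\div(\Pi_h(\theta\p))]^2\,dx \leq \int_\Omega [\div(\theta\p)]^2\,dx$, and since $\div(\theta\p)$ vanishes outside $\supp\theta \subset S$, the right-hand side equals $\int_S [\div(\theta\p)]^2\,dx$, while the left-hand side dominates $\int_S [\div(\Pi_h(\theta\p))]^2\,dx$; hence the lemma as stated follows for an arbitrary set $S$ containing $\supp\theta$, with $c=1$.
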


Spaces $X_h$ and $Y_h$ are equipped with the inner products
\begin{eqnarray*}
\left< u, v \right>_{X_h} &=& h^2 \sum_{T \in \T_h} (u)_T (v)_T, \quad u,v \in X_h, \\
\left< \p, \q \right>_{Y_h} &=& h^2 \sum_{e \in \E_h} (\p)_e (\q)_e, \hspace{0.6cm} \p, \q \in Y_h,
\end{eqnarray*}
and their induced norms $\| \cdot \|_{X_h}$ and $\| \cdot \|_{Y_h}$, respectively.
We have the following facts on the norms $\| \cdot \|_{X_h}$ and $\| \cdot \|_{Y_h}$.

% Lemma: Norm equivalence
\begin{lemma}
\label{Lem:norm}
The norm $\| \cdot \|_{X_h} $ agrees with the $L^2 (\Omega)$-norm in ${X_h}$, i.e.,
\begin{equation*}
\| u \|_{X_h} = \| u \|_{L^2 (\Omega)} \quad \forall u \in X_h.
\end{equation*}
In addition, the norm $\| \cdot \|_{Y_h}$ is equivalent to the $(L^2 (\Omega))^2$-norm in $Y_h$ in the sense that there exist constants $\underbar{c}$, $\bar{c} > 0$ independent of $|\Omega |$ and $h$ such that
\begin{equation*}
\underbar{c} \| \p \|_{(L^2 (\Omega))^2} \leq \| \p \|_{Y_h} \leq \bar{c} \| \p \|_{(L^2 (\Omega))^2} \quad \forall \p \in Y_h.
\end{equation*}
\end{lemma}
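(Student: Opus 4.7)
The plan is to treat the two claims separately: the first reduces to a one-line computation, while the second is a scaling/reference-element argument combined with the essential boundary condition built into $Y_h$.

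For the identity $\|u\|_{X_h} = \|u\|_{L^2(\Omega)}$, I would just unfold the definitions. Since $u \in X_h$ is constant on every element $T$ with value $(u)_T$ and $|T| = h^2$, one has
\begin{equation*}
\|u\|_{L^2(\Omega)}^2 \;=\; \sum_{T \in \T_h} \int_T (u)_T^2 \, dx \;=\; h^2 \sum_{T \in \T_h} (u)_T^2 \;=\; \|u\|_{X_h}^2 ,
\end{equation*}
so the equality is exact with constant $1$ and no hypothesis on $h$ or $|\Omega|$.

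For the norm-equivalence on $Y_h$, I would localize to a single element. Fix $T \in \T_h$ with edges $e_L, e_R, e_B, e_T$, and write $\p|_T(x_1,x_2) = (a_1 + b_1 x_1, a_2 + b_2 x_2)$ as in \eqref{RT}. Since $\p \cdot \n_e$ is constant along each edge of a Raviart--Thomas element, the four normal DOFs $(\p)_{e_\ast}$ determine $a_i, b_i$ uniquely, giving a linear isomorphism between $\mathcal{RT}_0(T)$ and $\mathbb{R}^4$. On the reference square $\hat T = [0,1]^2$, both $\p \mapsto \|\p\|_{L^2(\hat T)}$ and $\p \mapsto \bigl(\sum_{e \subset \partial \hat T}(\p)_e^2\bigr)^{1/2}$ are norms on the $4$-dimensional space $\mathcal{RT}_0(\hat T)$, hence equivalent; transporting to $T$ by the Piola/affine scaling (under which the edge DOFs are invariant and volumes scale by $h^2$) yields
\begin{equation*}
\underline{c}_0 \, h^2 \!\!\sum_{e \subset \partial T}\! (\p)_e^2 \;\leq\; \int_T |\p|^2 \,dx \;\leq\; \overline{c}_0 \, h^2 \!\!\sum_{e \subset \partial T}\! (\p)_e^2 ,
\end{equation*}
with constants $\underline{c}_0, \overline{c}_0 > 0$ depending only on the reference element.

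It remains to sum over $T \in \T_h$ and account for boundary edges. Because $\p \in Y_h \subset H_0(\div;\Omega)$ and $\p\cdot\n$ is constant along each edge, every boundary edge contributes a vanishing DOF, so edges outside $\E_h$ drop out of the local sum; every interior edge appears in exactly two elements, giving
\begin{equation*}
\underline{c}_0 \cdot 2 h^2 \!\sum_{e \in \E_h}\! (\p)_e^2 \;\leq\; \sum_{T \in \T_h} \int_T |\p|^2 \,dx \;\leq\; \overline{c}_0 \cdot 2 h^2 \!\sum_{e \in \E_h}\! (\p)_e^2 ,
\end{equation*}
which is exactly the claimed equivalence with $\underline{c} = \sqrt{2 \underline{c}_0}$ and $\overline{c} = \sqrt{2 \overline{c}_0}$. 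Neither constant depends on $h$ or $|\Omega|$ because the underlying reference-element equivalence is intrinsic and the Piola scaling is exact. The only mildly delicate point — and the one I would double-check carefully — is the boundary bookkeeping, ensuring that the essential condition $\p\cdot\n=0$ really eliminates boundary DOFs so that the per-element normal-DOF sum can be regrouped cleanly as a sum over $\E_h$ without picking up $h$-dependent losses.
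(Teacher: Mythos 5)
Your proof is correct, and it is more self-contained than the paper's, which simply defers to~\cite[Remark~2.2]{LPP:2019}; what you have written out is essentially the standard argument that the cited remark encapsulates. The first part is indeed a one-line computation, and the second part's structure --- equivalence of the $L^2$ norm and the normal-DOF $\ell^2$ norm on the four-dimensional space $\mathcal{RT}_0(\hat T)$, exact scaling to each element, and the edge-counting step where interior edges appear twice and boundary DOFs vanish by the essential condition (legitimate, since the normal component of an $\mathcal{RT}_0$ field on an axis-aligned square is constant along each edge) --- is sound and gives constants depending only on the reference element, hence independent of $h$ and $|\Omega|$. Two small points of hygiene: first, your final identification of constants is inverted --- from $2\underline{c}_0\, h^2 \sum_{e\in\E_h}(\p)_e^2 \leq \|\p\|_{(L^2(\Omega))^2}^2 \leq 2\overline{c}_0\, h^2 \sum_{e\in\E_h}(\p)_e^2$ one gets $\underline{c} = 1/\sqrt{2\overline{c}_0}$ and $\bar{c} = 1/\sqrt{2\underline{c}_0}$, not $\sqrt{2\underline{c}_0}$ and $\sqrt{2\overline{c}_0}$; this is cosmetic since the lemma only asserts existence. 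Second, under the genuine Piola transform the edge \emph{fluxes} $\int_e \p\cdot\n\,ds$ are invariant while the averaged DOFs $(\p)_e$ scale by $1/h$; your parenthetical claim of DOF invariance is instead correct for the plain componentwise affine pullback, which preserves the $\mathcal{RT}_0$ form on axis-aligned squares --- either transform yields the same local estimate $\int_T |\p|^2\,dx \approx h^2 \sum_{e\subset\partial T}(\p)_e^2$, so nothing breaks, but the justification as written conflates the two.
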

\begin{proof}
See~\cite[Remark~2.2]{LPP:2019}.
\end{proof}

In this setting, the following inverse inequality which is useful in a selection of parameters for solvers for total variation minimization~(e.g.~\cite{BT:2009}) holds:

% Proposition: Inverse inequality
\begin{proposition}
\label{Prop:inverse}
For any $\p \in Y_h$, we have
\begin{equation}
\| \div \p \|_{X_h}^2 \leq \frac{8}{h^2} \| \p \|_{Y_h}^2 .
\end{equation}
\end{proposition}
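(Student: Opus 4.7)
The plan is to use the explicit local structure of the lowest-order Raviart--Thomas element and reduce the inequality to an elementwise estimate. Since both norms $\|\cdot\|_{X_h}$ and $\|\cdot\|_{Y_h}$ are weighted $\ell^2$-sums indexed by elements and by edges respectively, it suffices to control $(\div \p)_T$ by the four edge degrees of freedom of $\p$ lying on $\partial T$, and then do a bit of bookkeeping.

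More concretely, I would start from the shape~\eqref{RT}. On a single square element $T = (x_0, x_0+h) \times (y_0, y_0+h)$, the divergence of $\p|_T$ is the constant $b_1 + b_2$. Evaluating the normal DOFs on the left/right and bottom/top edges and subtracting yields
\begin{equation*}
b_1 = \tfrac{1}{h}\bigl[(\p)_{e_R} - (\p)_{e_L}\bigr], \qquad b_2 = \tfrac{1}{h}\bigl[(\p)_{e_T} - (\p)_{e_B}\bigr],
\end{equation*}
with a consistent choice of normals $\n_e$. Thus
\begin{equation*}
(\div \p)_T = \tfrac{1}{h}\bigl[\pm(\p)_{e_R} \pm (\p)_{e_L} \pm (\p)_{e_T} \pm (\p)_{e_B}\bigr],
\end{equation*}
and applying the elementary inequality $(a_1+a_2+a_3+a_4)^2 \leq 4 \sum_i a_i^2$ gives the pointwise bound $(\div \p)_T^2 \leq \tfrac{4}{h^2} \sum_{e \subset \partial T}(\p)_e^2$.

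The last step is a summation over $T \in \T_h$: multiplying by $h^2$ and collecting DOFs yields
\begin{equation*}
\|\div \p\|_{X_h}^2 = h^2 \sum_{T \in \T_h} (\div \p)_T^2 \leq 4 \sum_{T \in \T_h} \sum_{e \subset \partial T}(\p)_e^2.
\end{equation*}
Each interior edge $e \in \E_h$ is shared by exactly two elements, so it is counted twice, while each boundary edge is counted once but contributes zero because $\p \cdot \n = 0$ on $\partial \Omega$ for $\p \in H_0(\div; \Omega)$. Hence the right-hand side equals $8 \sum_{e \in \E_h} (\p)_e^2 = \tfrac{8}{h^2}\|\p\|_{Y_h}^2$, which is the desired inequality with the stated constant $8$.

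There is no real obstacle here; the only thing to be careful about is the signs in recovering $b_1, b_2$ from the edge DOFs and the fact that boundary edges must be discarded using the $H_0(\div;\Omega)$ condition so that the sum $\sum_T \sum_{e \subset \partial T}$ effectively reduces to twice the sum over $\E_h$.
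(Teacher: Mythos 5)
Your proof is correct: recovering $b_1, b_2$ from the edge degrees of freedom, the elementwise bound $(\div \p)_T^2 \leq \tfrac{4}{h^2}\sum_{e \subset \partial T}(\p)_e^2$ via $(a_1+a_2+a_3+a_4)^2 \leq 4\sum_i a_i^2$, and the factor-of-two edge counting (with boundary edges vanishing thanks to the $H_0(\div;\Omega)$ condition) combine to give exactly the stated constant $8/h^2$. The paper itself gives no inline argument and simply cites \cite[Proposition~2.5]{LPP:2019}, and your direct Raviart--Thomas computation is precisely the standard proof behind that reference, so the two approaches coincide in substance.
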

\begin{proof}
See~\cite[Proposition~2.5]{LPP:2019}.
\end{proof}

In the rest of the paper, we may omit the subscripts $X_h$ and $Y_h$ from $\| \cdot \|_{X_h}$ and $\| \cdot \|_{Y_h}$, respectively, if there is no ambiguity.

We define the subset $C$ of $Y_h$ as
\begin{equation*}
C = \left\{ \p \in Y_h : |(\p)_e| \leq 1 \quad \forall e \in \E_h \right\}.
\end{equation*}
Then, we have the following discrete version of~\eqref{dual_model}:
\begin{equation}
\label{d_dual_model}
\min_{\p \in C} \left\{ \F(\p) := F^*( \div \p) \right\} .
\end{equation}
One may refer~\cite{HHSVW:2019,LP:2019a} for further details on~\eqref{d_dual_model}.
In the sequel, we denote a solution of~\eqref{d_dual_model} by $\p^* \in Y_h$.
In order to design a convergent additive Schwarz method for~\eqref{d_dual_model}, we require the following assumption on $F$, which is common in literature on Schwarz methods; see, e.g.,~\cite{BTW:2003,Tai:2003}.

% Assumption: Regularity of F
\begin{assumption}
\label{Ass:regular}
{\rm The function $F$ in~\eqref{d_dual_model} is $\alpha$-strongly convex for some $\alpha > 0$, i.e., the map
\begin{equation*}
u \mapsto F(u) - \frac{\alpha}{2} \| u \|^2
\end{equation*}
is convex.
In addition, $F$ is Frech\'{e}t differentiable and its derivative $F'$ is $\beta$-Lipschitz continuous for some $\beta > 0$, i.e.,
\begin{equation*}
\| F'(u) - F'(v) \| \leq \beta \| u - v \| \quad \forall u, v \in X_h.
\end{equation*}}
\end{assumption}

We define the \textrm{condition number} $\kappa$ of $F$ by
\begin{equation}
\label{kappa}
\kappa = \frac{\beta}{\alpha},
\end{equation}
where $\alpha$ and $\beta$ were given in Assumption~\ref{Ass:regular}.
In the case when $F(u) = \frac{1}{2} \left<u, Ku \right> - \left<g, u \right>$ for some symmetric positive definite operator $K$:~$X_h \rightarrow X_h$ and $g \in X_h$, it is straightforward to see that $\kappa$ agrees with the ratio of the extremal eigenvalues of $K$. 

There are several interesting examples of $F$ satisfying Assumption~\ref{Ass:regular}.
First, we consider the ROF model~\eqref{ROF}, i.e., $F(u) = \frac{\lambda}{2} \|u - f\|^2$ for $\lambda > 0$ and $f \in X_h$.
Recall that the discrete ROF model defined on $X_h$ is given by
\begin{equation}
\label{ROF_App}
\min_{u \in X_h} \left\{ \frac{\lambda}{2} \| u - f \|^2 + TV_{\Omega} (u) \right\} .
\end{equation}
It is clear that Assumption~\ref{Ass:regular} is satisfied with $\alpha = \beta = \lambda$ and the condition number $\kappa = \beta / \alpha = 1$.
The second example is the $TV$-$H^{-1}$ model: a natural discretization of~\eqref{K} is given by
\begin{equation}
\label{K_App}
\min_{u \in X_h} \left\{ \frac{\lambda}{2} \| u - f \|_{K^{-1}}^2 + TV_{\Omega} (u)\right\},
\end{equation}
where $K$:~$X_h \rightarrow X_h$ is the standard 5-point-stencil approximation of $- \Delta$ with the homogeneous essential boundary condition~\cite{LeVeque:2007} and $\| v \|_{K^{-1}} = \left< K^{-1}v, v \right>^{1/2}$ for $v \in X_h$.
Since $K$ is nonsingular,~\eqref{K_App} satisfies Assumption~\ref{Ass:regular}.
It is well-known that the condition number of $K$ becomes larger as the image size grows; a detailed estimate for the condition number can be found in~\cite{LeVeque:2007}.
We note that a domain decomposition method for the $TV$-$H^{-1}$ model was previously considered in~\cite{Schonlieb:2009}.

If $F$ satisfies Assumption~\ref{Ass:regular}, we have the following properties on $F^*$~\cite{CP:2016}.

% Proposition: Regularity of F^*
\begin{proposition}
\label{Prop:regular}
Under Assumption~\ref{Ass:regular}, the function $F^*$ in~\eqref{d_dual_model} is $(1/\beta$)-strongly convex and Frech\'{e}t differentiable with $(1/\alpha)$-Lipschitz continuous derivative.
Equivalently, the followings hold:
\begin{align*}
F^*(u) - F^*(v) - \left< (F^*)' (v), u-v \right> &\geq \frac{1}{2\beta} \| u - v \|^2 \quad \forall u, v \in X_h, \\
\| (F^*)'(u) - (F^*)'(v) \| &\leq \frac{1}{\alpha} \| u - v \| \quad\quad \forall u,v \in X_h.
\end{align*}
\end{proposition}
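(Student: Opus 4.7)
The plan is to treat the two claims separately and in the following order: (i) $(F^*)'$ exists everywhere on $X_h$ and is $(1/\alpha)$-Lipschitz; (ii) $F^*$ is $(1/\beta)$-strongly convex. Both are classical convex-analytic facts, and the route I would take is the standard pairing between strong convexity on one side and cocoercivity/smoothness of the gradient on the other.

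\medskip
\noindent\textbf{Step 1: $F^*$ is well defined and differentiable.} Under Assumption~\ref{Ass:regular}, $F$ is strongly convex and coercive on $X_h$, so for every $u \in X_h$ the supremum
\begin{equation*}
F^*(u) = \sup_{v \in X_h} \bigl\{ \langle u, v \rangle - F(v) \bigr\}
\end{equation*}
is attained at a unique point $y(u) \in X_h$, characterized by $F'(y(u)) = u$. Standard conjugate calculus then gives that $F^*$ is Fréchet differentiable with $(F^*)'(u) = y(u)$, i.e.\ $(F^*)' = (F')^{-1}$.

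\medskip
\noindent\textbf{Step 2: $(1/\alpha)$-Lipschitz continuity of $(F^*)'$.} Strong convexity of $F$ is equivalent to strong monotonicity of $F'$:
\begin{equation*}
\langle F'(y_1) - F'(y_2), y_1 - y_2 \rangle \geq \alpha \| y_1 - y_2 \|^2 \quad \forall y_1, y_2 \in X_h.
\end{equation*}
Setting $y_i = (F^*)'(u_i)$ so that $u_i = F'(y_i)$ and applying Cauchy--Schwarz would give
\begin{equation*}
\| u_1 - u_2 \| \cdot \| (F^*)'(u_1) - (F^*)'(u_2) \| \geq \alpha \| (F^*)'(u_1) - (F^*)'(u_2) \|^2,
\end{equation*}
which yields the desired Lipschitz bound.

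\medskip
\noindent\textbf{Step 3: $(1/\beta)$-strong convexity of $F^*$.} Here I would invoke the Baillon--Haddad-type cocoercivity inequality, which is the substantive ingredient: since $F$ is convex with $\beta$-Lipschitz gradient,
\begin{equation*}
F(x) - F(y) - \langle F'(y), x - y \rangle \geq \frac{1}{2\beta} \| F'(x) - F'(y) \|^2 \quad \forall x, y \in X_h.
\end{equation*}
Given $u, v \in X_h$, set $y = (F^*)'(u)$ and $x = (F^*)'(v)$, so that $u = F'(y)$ and $v = F'(x)$. Using $F^*(u) = \langle u, y \rangle - F(y)$, $F^*(v) = \langle v, x \rangle - F(x)$, and $(F^*)'(v) = x$, a direct rearrangement gives
\begin{equation*}
F^*(u) - F^*(v) - \langle (F^*)'(v), u - v \rangle = F(x) - F(y) - \langle F'(y), x - y \rangle,
\end{equation*}
and the cocoercivity inequality above then yields the required lower bound $\frac{1}{2\beta}\|u - v\|^2$.

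\medskip
\noindent\textbf{Main obstacle.} The nontrivial ingredient is the cocoercivity inequality used in Step 3; the other two steps are essentially bookkeeping once the identification $(F^*)' = (F')^{-1}$ is in hand. I would either cite the Baillon--Haddad theorem (as in~\cite{CP:2016}) or derive it by combining the descent lemma $F(x) \leq F(y) + \langle F'(y), x - y \rangle + \frac{\beta}{2}\|x - y\|^2$ with an optimization over $x$ of the convex function $z \mapsto F(z) - \langle F'(y), z \rangle$, which attains its minimum at $z = y$ and whose value at the point $x - \tfrac{1}{\beta}(F'(x) - F'(y))$ produces exactly the cocoercivity constant.
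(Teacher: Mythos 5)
Your proposal is correct, but there is nothing in the paper to compare it against: the paper gives no proof of Proposition~\ref{Prop:regular} at all, simply asserting it with a citation to~\cite{CP:2016}. Your three-step argument is exactly the standard convex-analytic derivation underlying that citation, and every step checks out. In Step~1, strong convexity gives coercivity of $v \mapsto \langle u,v\rangle - F(v)$ (up to sign) on the finite-dimensional space $X_h$, so the supremum is attained at the unique $y(u)$ with $F'(y(u)) = u$, and $(F^*)' = (F')^{-1}$ follows. Step~2 is the correct strong-monotonicity-plus-Cauchy--Schwarz argument and delivers the constant $1/\alpha$ as stated. The key identity in Step~3 is worth verifying explicitly, since it is where a sign error would typically hide: with $u = F'(y)$, $v = F'(x)$, the Fenchel equalities $F^*(u) = \langle u,y\rangle - F(y)$ and $F^*(v) = \langle v,x\rangle - F(x)$ give
\begin{equation*}
F^*(u) - F^*(v) - \langle x, u-v\rangle = F(x) - F(y) + \langle u, y-x\rangle = F(x) - F(y) - \langle F'(y), x-y\rangle,
\end{equation*}
and the Baillon--Haddad cocoercivity inequality bounds this below by $\frac{1}{2\beta}\|F'(x)-F'(y)\|^2 = \frac{1}{2\beta}\|u-v\|^2$, matching the claimed constant. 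Your sketch for deriving cocoercivity from the descent lemma (minimizing $z \mapsto F(z) - \langle F'(y), z\rangle$ and evaluating the descent step at $x - \tfrac{1}{\beta}(F'(x)-F'(y))$) is also the standard and correct route. In short: the proposal is a complete, self-contained proof of a statement the paper delegates to the literature.
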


A solution of a discrete primal problem
\begin{equation*}
\min_{u \in X_h} \left\{ F(u) + TV_{\Omega}(u) \right\}
\end{equation*}
can be obtained from the solution $\p^*$ of~\eqref{d_dual_model} by solving
\begin{equation}
\label{pd_equiv}
\min_{u \in X_h} \left\{ F(u) - \left< u, \div \p^* \right> \right\}.
\end{equation}
See~\cite{CP:2016} for details.
Under Assumption~\ref{Ass:regular}, the problem~\eqref{pd_equiv} is smooth and strongly convex.
Therefore, one can solve~\eqref{pd_equiv} efficiently by linearly convergent first order methods such as~\cite[Algorithm~5]{CP:2016}.

The Bregman distance~\cite{Bregman:1967} associated with $\F$ is denoted by $D_{\F}$, i.e.,
\begin{equation}
\label{Bregman}
D_{\F}(\p, \q) = \F (\p) - \F (\q) - \left< \F' (\q), \p - \q \right>, \quad \p, \q \in Y_h.
\end{equation}
Note that
\begin{equation*}
\F' (\p) = \div^* \left( (F^*)' (\div \p) \right) , \quad \p \in Y_h,
\end{equation*}
where $\div^*$:~$X_h \rightarrow Y_h$ is the adjoint of $\div$:~$Y_h \rightarrow X_h$.
We have the following useful property of $D_{\F}$~\cite[Lemma~3.1]{CT:1993}.

% Lemma: Three-points identity
\begin{lemma}
\label{Lem:three}
For any $\p$, $\q$, $\r \in Y_h$, we have
\begin{equation*}
D_{\F} (\r, \p) - D_{\F} (\r, \q) = D_{\F} (\q, \p) - \left< \F' (\p) - \F' (\q), \r - \q \right>.
\end{equation*}
\end{lemma}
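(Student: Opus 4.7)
The plan is to prove this purely by expanding the definition~\eqref{Bregman} of the Bregman distance on both sides and comparing terms; there is no real obstacle here, only bookkeeping. The identity is algebraic and uses nothing about $\F$ beyond Fr\'echet differentiability, so no convexity or strong convexity is invoked.

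First, I would write out the three Bregman distances appearing in the statement using~\eqref{Bregman}:
\begin{align*}
D_{\F}(\r, \p) &= \F(\r) - \F(\p) - \left<\F'(\p), \r - \p\right>, \\
D_{\F}(\r, \q) &= \F(\r) - \F(\q) - \left<\F'(\q), \r - \q\right>, \\
D_{\F}(\q, \p) &= \F(\q) - \F(\p) - \left<\F'(\p), \q - \p\right>.
\end{align*}
Next, I would form the left-hand side $D_{\F}(\r, \p) - D_{\F}(\r, \q)$; the $\F(\r)$ terms cancel, leaving
\begin{equation*}
D_{\F}(\r, \p) - D_{\F}(\r, \q) = \F(\q) - \F(\p) - \left<\F'(\p), \r - \p\right> + \left<\F'(\q), \r - \q\right>.
\end{equation*}

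Then I would subtract $D_{\F}(\q, \p)$ from this expression. The scalar part $\F(\q) - \F(\p)$ cancels against the corresponding terms in $D_{\F}(\q, \p)$, and the remaining inner products collapse by writing $\r - \p = (\r - \q) + (\q - \p)$:
\begin{equation*}
-\left<\F'(\p), \r - \p\right> + \left<\F'(\p), \q - \p\right>
= -\left<\F'(\p), \r - \q\right>.
\end{equation*}
Combining with the $\left<\F'(\q), \r - \q\right>$ term yields $-\left<\F'(\p) - \F'(\q), \r - \q\right>$, which rearranges to the stated identity. The entire proof is a few lines of direct computation, and since Lemma~\ref{Lem:three} is cited from~\cite{CT:1993}, I would likely just present it as a one-line verification rather than a labelled multi-step argument.
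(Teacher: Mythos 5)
Your computation is correct: expanding all three Bregman distances via~\eqref{Bregman}, cancelling the $\F(\r)$ and $\F(\q)-\F(\p)$ terms, and collapsing the inner products through $\r - \p = (\r - \q) + (\q - \p)$ verifies the identity, which indeed uses only Fr\'echet differentiability of $\F$. The paper gives no proof of its own, citing \cite[Lemma~3.1]{CT:1993} instead, and your direct verification is exactly the standard argument behind that citation, so there is nothing to reconcile.
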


For later use, we state the following trivial lemma for the set $C$.

% Lemma: L^2 norm in C
\begin{lemma}
\label{Lem:L2_C}
For any $\p$, $\q \in C$, we have
\begin{equation*}
\| \p - \q \|^2 \leq 8 |\Omega|.
\end{equation*}
\end{lemma}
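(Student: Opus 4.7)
The plan is to unpack the definitions and do an elementary edge count. Since $C$ constrains each degree of freedom by $|(\p)_e| \leq 1$, the triangle inequality immediately gives
\[
|(\p)_e - (\q)_e| \leq |(\p)_e| + |(\q)_e| \leq 2 \qquad \forall e \in \E_h,
\]
so each squared difference is bounded by $4$.

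Next I would plug this into the definition of $\|\cdot\|_{Y_h}$:
\[
\|\p - \q\|_{Y_h}^2 \;=\; h^2 \sum_{e \in \E_h} \bigl((\p)_e - (\q)_e\bigr)^2 \;\leq\; 4 h^2 \, |\E_h|.
\]
It remains to bound $|\E_h|$ in terms of $|\Omega|$.

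For the count, recall $\Omega = (0, m_1 h) \times (0, m_2 h)$ is partitioned into $m_1 m_2$ square elements of side $h$, so $|\Omega| = m_1 m_2 h^2$. The interior edges split into vertical interior edges (there are $(m_1 - 1) m_2$ of them) and horizontal interior edges (there are $m_1 (m_2 - 1)$ of them), giving
\[
|\E_h| = (m_1 - 1) m_2 + m_1 (m_2 - 1) \leq 2 m_1 m_2.
\]
Combining the two bounds yields
\[
\|\p - \q\|_{Y_h}^2 \leq 4 h^2 \cdot 2 m_1 m_2 = 8 |\Omega|,
\]
which is the claim.

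There is no real obstacle here: the only slightly non-trivial ingredient is remembering that $\E_h$ contains only interior edges (because $Y_h \subset H_0(\div;\Omega)$ forces zero normal traces on $\partial \Omega$, so boundary degrees of freedom are excluded). The constant $8$ arises purely from the factor $4$ in the pointwise bound and the factor $2$ in the edge-to-element ratio for a rectangular grid.
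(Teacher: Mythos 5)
Your proof is correct and follows essentially the same route as the paper's: the pointwise bound $[(\p)_e - (\q)_e]^2 \leq 4$, the identity $\| \p - \q \|^2 = h^2 \sum_{e \in \E_h} [(\p)_e - (\q)_e]^2 \leq 4h^2 |\E_h|$, and the edge count $|\E_h| = (m_1-1)m_2 + m_1(m_2-1) \leq 2m_1 m_2$ with $|\Omega| = m_1 m_2 h^2$. You have merely spelled out the final counting step that the paper leaves as ``easily acquired,'' so there is nothing to add.
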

\begin{proof}
Note that $|\Omega| = m_1 m_2 h^2$ and $|\E_h| = (m_1 - 1)m_2 + m_1 (m_2 -1)$ for an image of the size $m_1 \times m_2$.
Using the inequality
\begin{equation*}
\| \p - \q \|^2 = h^2 \sum_{e \in \E_h} [ (\p)_e - (\q)_e ]^2 \leq 4h^2 |\E_h|,
\end{equation*}
the conclusion is easily acquired.
\end{proof}

Next, we present a space decomposition setting for $W = Y_h$.
Let $W_k$, $k = 1, \dots, N$ be subspaces of $W$ such that
\begin{equation}
\label{decomposition}
W = \sumk R_k^* W_k,
\end{equation}
where $R_k$:~$W \rightarrow W_k$ is the restriction operator and its adjoint $R_k^*$:~$W_k \rightarrow W$ is the natural extension operator.
We state an additional assumption on~\eqref{decomposition} inspired by~\cite{BTW:2003}.

% Assumption: Stable decomposition
\begin{assumption}
\label{Ass:stable}
{\rm There exist constants $c_1 > 0$ and $c_2 \geq 0$ which satisfy the following:
for any $\p$, $\q \in C$, there exists $\r_k \in W_k$~($k= 1, \dots, N$) such that
\begin{subequations}
\begin{equation}
\label{stable1}
\p - \q = \sumk R_k^* \r_k,
\end{equation}
\begin{equation}
\label{stable2}
\q + R_k^* \r_k \in C, \quad k = 1, \dots N,
\end{equation}
\begin{equation}
\label{stable3}
\sumk \| \div R_k^* \r_k \|^2 \leq c_1 \| \div (\p - \q) \|^2 + c_2 \| \p - \q \|^2 .
\end{equation}
\end{subequations}}
\end{assumption}

In Assumption~\ref{Ass:stable}, we call $\{ \r_k \}$ a \textit{stable decomposition} of $\p - \q$.
A particular choice of spaces $\{ W_k \}$ and functions $\{ \r_k \}$ satisfying Assumption~\ref{Ass:stable} based on an overlapping domain decomposition of $\Omega$ will be given in Section~\ref{Sec:DD}.

We conclude this section by presenting two useful estimates for sequences of positive real numbers.

% Lemma: Recurrence relation 1
\begin{lemma}
\label{Lem:recur1}
Let $\{a_n \}_{n \geq 0}$ be a sequence of positive real numbers which satisfies
\begin{equation*}
a_n - a_{n+1} \geq \frac{1}{c^2} (a_{n+1} - \gamma a_n )^2, \quad n \geq 0
\end{equation*}
for some $c > 0$ and $0 \leq \gamma < 1$.
Then we have
\begin{equation*}
a_n \leq \frac{1}{\tc n + 1/a_0},
\end{equation*}
where
\begin{equation*}
\tc = \frac{(1- \gamma)^2}{2a_0 (1-\gamma)^2 + (\gamma \sqrt{a_0} + c)^2}.
\end{equation*}
\end{lemma}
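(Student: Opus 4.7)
The plan is to prove the per-step bound $1/a_{n+1} - 1/a_n \geq \tilde{c}$ for all $n \geq 0$, from which the conclusion follows by a straightforward telescoping $1/a_n - 1/a_0 \geq \tilde{c}\, n$. Since $a_{n+1} \leq a_n$ by hypothesis, the sequence is nonincreasing, so $a_n, a_{n+1} \leq a_0$ and $\Delta_n := a_n - a_{n+1} \leq a_0$ for all $n$; these crude bounds will let me replace $\sqrt{\Delta_n}$ by $\sqrt{a_0}$ and $a_{n+1}$ by $2 a_0$ at the appropriate moments.

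The decisive first step is to rewrite the residual $a_{n+1} - \gamma a_n$ in terms of $a_{n+1}$ (rather than $a_n$): using $a_n = a_{n+1} + \Delta_n$, one has $a_{n+1} - \gamma a_n = (1-\gamma)a_{n+1} - \gamma \Delta_n$. Taking square roots of the hypothesis and applying the triangle inequality yields
\[
(1-\gamma)\,a_{n+1} \leq \gamma \Delta_n + c\sqrt{\Delta_n} = \sqrt{\Delta_n}\,\bigl(\gamma\sqrt{\Delta_n} + c\bigr).
\]
Replacing the inner $\sqrt{\Delta_n}$ by $\sqrt{a_0}$ and squaring then gives the bound $(1-\gamma)^2 a_{n+1}^2 \leq (\gamma\sqrt{a_0}+c)^2\, \Delta_n$.

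To convert this estimate on $a_{n+1}^2$ into one on the product $a_n a_{n+1} = a_{n+1}^2 + a_{n+1}\Delta_n$, I will add to it the trivial inequality $(1-\gamma)^2 a_{n+1}\Delta_n \leq 2a_0 (1-\gamma)^2 \Delta_n$, which follows from $a_{n+1} \leq 2 a_0$. The left-hand sides combine to $(1-\gamma)^2 a_n a_{n+1}$, and the right-hand sides combine to exactly $\bigl[\,2 a_0 (1-\gamma)^2 + (\gamma\sqrt{a_0}+c)^2\,\bigr]\,\Delta_n$. Dividing through by $a_n a_{n+1}$ yields $\Delta_n/(a_n a_{n+1}) \geq \tilde{c}$, i.e., the desired per-step bound $1/a_{n+1} - 1/a_n \geq \tilde{c}$; telescoping from $0$ to $n-1$ finishes the proof.

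The main conceptual obstacle, in my view, is the initial rearrangement: expressing the residual in terms of $a_{n+1}$ (so that $\gamma$ multiplies the small quantity $\Delta_n$ instead of the large quantity $a_n$) is precisely what produces the factor $\gamma\sqrt{a_0}+c$ in the denominator of $\tilde{c}$. The more naive rearrangement keeping $a_n$ on the right-hand side would yield $\sqrt{a_0}+c$ instead, and hence a bound with the wrong dependence on $\gamma$. Once this rewrite is in hand, the remainder is elementary bookkeeping and a standard telescoping sum.
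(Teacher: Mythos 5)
Your proof is correct, and it is genuinely more self-contained than the paper's, which establishes Lemma~\ref{Lem:recur1} purely by citation: as Remark~\ref{Rem:recurrence} records, the cited proof in \cite{CTWY:2015} first derives the intermediate inequality \eqref{recurrence1}, namely $a_n - a_{n+1} \geq \bigl(\tfrac{1-\gamma}{\gamma\sqrt{a_0}+c}\bigr)^2 a_{n+1}^2$, and then feeds it into the generic recursion lemma of \cite{TX:2002} to obtain the $O(1/n)$ bound. The first half of your argument reproduces exactly this intermediate inequality, by the same essential device (the identity $a_{n+1}-\gamma a_n = (1-\gamma)a_{n+1}-\gamma\Delta_n$ with $\Delta_n = a_n - a_{n+1}$, the square root of the hypothesis, and the crude bound $\Delta_n \leq a_0$, all of which are justified since the hypothesis forces $a_{n+1}\leq a_n \leq a_0$ and $a_{n+1}>0$). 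Where you depart from the paper is the second half: instead of invoking the Tai--Xu recursion lemma as a black box, you add the slack inequality $(1-\gamma)^2 a_{n+1}\Delta_n \leq 2a_0(1-\gamma)^2\Delta_n$ so that the left-hand sides assemble into $(1-\gamma)^2 a_n a_{n+1}$, divide by $a_n a_{n+1}>0$, and telescope the resulting per-step bound $1/a_{n+1}-1/a_n \geq \tilde{c}$. This buys two things: the proof is elementary and self-contained (no external lemma needed), and it makes completely transparent where each term of the denominator $2a_0(1-\gamma)^2 + (\gamma\sqrt{a_0}+c)^2$ in $\tilde{c}$ originates --- indeed, it reveals that the constant is slightly suboptimal, since $a_{n+1}\leq a_0$ would let you replace $2a_0$ by $a_0$ and prove a marginally stronger bound; using $2a_0$ anyway to match the stated $\tilde{c}$ exactly is a legitimate choice. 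The paper's route, by contrast, is modular: it reuses a standard recursion lemma, at the cost of an opaque constant. One small quibble: your closing remark that the ``naive'' rearrangement would produce $\sqrt{a_0}+c$ describes bounding $(1-\gamma)a_n \leq \Delta_n + c\sqrt{\Delta_n}$, which estimates $a_n^2$ rather than $a_{n+1}^2$; this aside is inessential and does not affect the validity of your argument.
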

\begin{proof}
See~\cite[Lemma~3.5]{CTWY:2015}.
\end{proof}

% Lemma: Recurrence relation 2
\begin{lemma}
\label{Lem:recur2}
Let $\{a_n \}_{n \geq 0}$ be a sequence of positive real numbers which satisfies
\begin{equation*}
a_{n+1} \leq \gamma a_n + c, \quad n \geq 0
\end{equation*}
for some $c > 0$ and $0 \leq \gamma < 1$.
Then we have
\begin{equation*}
a_n \leq \gamma^n \left( a_0 - \frac{c}{1-\gamma} \right) + \frac{c}{1-\gamma}.
\end{equation*}
\end{lemma}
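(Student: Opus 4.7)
The plan is to reduce this fixed-point-style recurrence to a plain geometric one via an affine shift. Let $L = c/(1-\gamma)$ be the fixed point of the map $t \mapsto \gamma t + c$, and set $b_n = a_n - L$. The hypothesis $a_{n+1} \leq \gamma a_n + c$ then rewrites as
\begin{equation*}
b_{n+1} = a_{n+1} - L \leq \gamma a_n + c - L = \gamma a_n - \gamma L = \gamma(a_n - L) = \gamma b_n,
\end{equation*}
where I used $c - L = c - c/(1-\gamma) = -\gamma c/(1-\gamma) = -\gamma L$. A trivial induction on $n$ then gives $b_n \leq \gamma^n b_0$, which is exactly the claimed bound $a_n \leq \gamma^n(a_0 - c/(1-\gamma)) + c/(1-\gamma)$.

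As a sanity check (and an alternative route avoiding the substitution), one may iterate the inequality directly: applying it $n$ times yields
\begin{equation*}
a_n \leq \gamma^n a_0 + c \sum_{k=0}^{n-1} \gamma^k = \gamma^n a_0 + c \cdot \frac{1-\gamma^n}{1-\gamma},
\end{equation*}
and collecting the $\gamma^n$ terms produces the same expression. The geometric sum is valid because $0 \leq \gamma < 1$.

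There is essentially no obstacle here; the only thing to be careful about is noting that $b_n$ need not be positive (if $a_0 < L$, then $b_n$ is non-positive and the bound just says $a_n \leq L$, which is still correct). The strict inequality $\gamma < 1$ is needed both to define $L$ and to sum the geometric series, while the positivity of $c$ and of the $a_n$ plays no role in the estimate itself beyond being part of the setup.
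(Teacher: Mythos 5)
Your proof is correct; the paper gives no argument for this lemma (its proof reads simply ``It is elementary''), and your affine shift $b_n = a_n - c/(1-\gamma)$ to the fixed point, together with the direct geometric-sum unrolling you offer as a check, is exactly the standard elementary argument the author has in mind. The only point worth stating explicitly in the induction --- that multiplying $b_n \leq \gamma^n b_0$ by $\gamma$ preserves the inequality direction because $\gamma \geq 0$, regardless of the sign of $b_0$ --- is effectively covered by your closing remark, so nothing is missing.
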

\begin{proof}
It is elementary.
\end{proof}

% Remark: There are also other inequalities available.
\begin{remark}
\label{Rem:recurrence}
In~\cite[Lemma~3.5]{CTWY:2015}, it was proved that the sequence in Lemma~\ref{Lem:recur1} satisfies
\begin{equation}
\label{recurrence1}
a_n - a_{n+1} \geq \left( \frac{1-\gamma}{\gamma \sqrt{a_0} + c} \right)^2 a_{n+1}^2.
\end{equation}
Then~\eqref{recurrence1} was combined with~\cite[Lemma~3.2]{TX:2002} to yield the desired result.
We note that several alternative estimates for the $O(1/n)$ convergence of~\eqref{recurrence1} with different constants are available; see~\cite[Lemmas~3.6 and~3.8]{Beck:2015} and~\cite[Proposition~3.1]{CP:2015}.
\end{remark}

% Section: An additive Schwarz method
\section{An additive Schwarz method}
\label{Sec:Schwarz}
In this section, we propose an abstract additive Schwarz method for dual total variation minimization~\eqref{d_dual_model} in terms of the space decomposition~\eqref{decomposition}.
Then, several remarkable convergence properties of the proposed method are investigated.
Algorithm~\ref{Alg:ASM} shows the proposed additive Schwarz method for~\eqref{d_dual_model}.

% Algorithm: ASM
\begin{algorithm}[]
\caption{Additive Schwarz method for dual total variation minimization~\eqref{d_dual_model}}
\begin{algorithmic}[]
\label{Alg:ASM}
\STATE Choose $\p^{(0)} \in C$ and $\tau \in (0, 1/N]$.
\FOR{$n=0,1,2,\dots$}
\item \begin{eqnarray*}
\r_k^{(n+1)} &\in& \argmin_{\r_k \in W_k,  \p^{(n)} + R_k^* \r_k \in C } \F \left(\p^{(n)} + R_k^* \r_k \right), \quad k=1,\dots, N \\
\p^{(n+1)} &=& \p^{(n)} + \tau \sumk R_k^* \r_k^{(n+1)}
\end{eqnarray*}
\ENDFOR
\end{algorithmic}
\end{algorithm}

We note that an additive Schwarz method for the dual ROF model based on a constraint decomposition of $C$ was proposed in~\cite{CTWY:2015}, which is slightly different from our method.
Differently from that in~\cite{CTWY:2015}, Algorithm~\ref{Alg:ASM} does not require an explicit decomposition of the constraint set $C$ as in~\cite[Proposition~2.1]{CTWY:2015}.
A similar situation was previously addressed in~\cite{BTW:2003,Tai:2003} for the obstacle problem.
In addition, the proposed method is applicable to not only the ROF model but also general total variation minimization problems satisfying Assumption~\ref{Ass:regular}.

In order to analyze the convergence of Algorithm~\ref{Alg:ASM}, we first present a descent rule for Algorithm~\ref{Alg:ASM} in Lemma~\ref{Lem:descent}, which is a main tool for the convergence analysis.
We note that arguments using the descent rule are standard in the analysis of first-order methods for convex optimization~(see~\cite[Lemma~2.3]{BT:2009},~\cite[equation~(3.6)]{CP:2015}, and~\cite[equations~(4.37) and (C.1)]{CP:2016} for instance).
The proof of Lemma~\ref{Lem:descent} closely follows that of~\cite[Lemma~3.2]{LP:2019b}.
The main difference is that Lemma~\ref{Lem:descent} allows any smooth $\F$ using the notion of Bregman distance, while~\cite{LP:2019b} is restricted to the dual ROF case. In addition, the decomposition of $\p - \q$ is not unique due to the overlapping of subdomains, while it is unique in the nonoverlapping case given in~\cite{LP:2019b}.

% Lemma: Descent lemma for ASM
\begin{lemma}
\label{Lem:descent}
Let $\p$, $\q \in C$ and $\tau \in (0, 1/N]$.
We define $\hr_k \in W_k$, $\hq \in Y$, and $\br_k \in W_k$, $k=1, \dots , N$, as follows.
\begin{enumerate}[label=\emph{(\roman*)}]
\item $\displaystyle \hr_k \in \argmin_{\r_k \in W_k, \q + R_k^* \r_k \in C} \F \left( \q + R_k^* \r_k \right)$, $k=1, \dots, N$.
\item $\displaystyle \hq = \q + \tau \sumk R_k^* \hr_k$.
\item $\{ \br_k \}$: a stable decomposition of $\p - \q$ in Assumption~\ref{Ass:stable}.
\end{enumerate}
Then we have
\begin{equation*}
\tau \F(\p) + (1- \tau) \F(\q) - \F(\hq) \geq \tau \sumk \left( D_{\F}(\q + R_k^* \br_k , \q + R_k^* \hr_k ) - D_{\F} (\q + R_k^* \br_k ,\q) \right).
\end{equation*}
\end{lemma}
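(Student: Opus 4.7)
The plan is to combine three ingredients: a convexity bound on $\F(\hq)$ that exploits the step-size restriction $\tau \le 1/N$, a double application of the definition of the Bregman distance to convert each $\F(\q + R_k^*\hr_k)$ into $D_{\F}$ terms, and the first-order optimality condition for each subproblem. Throughout, convexity of $\F$ (which follows from Proposition~\ref{Prop:regular}) plays a central role.

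First, because $N\tau \le 1$ I would view $\hq$ as the genuine convex combination
\begin{equation*}
\hq = (1-N\tau)\q + \tau \sumk \bigl(\q + R_k^*\hr_k\bigr),
\end{equation*}
so that convexity of $\F$ yields
\begin{equation*}
\F(\hq) \le (1-N\tau)\F(\q) + \tau \sumk \F\bigl(\q + R_k^*\hr_k\bigr).
\end{equation*}
This is the only step that uses $\tau \le 1/N$.

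Next, using the Bregman identity~\eqref{Bregman} I would rewrite
\begin{align*}
\F(\q + R_k^*\hr_k) &= \F(\q + R_k^*\br_k) - D_{\F}(\q + R_k^*\br_k,\q + R_k^*\hr_k) \\
&\quad - \bigl\langle \F'(\q + R_k^*\hr_k),\, R_k^*(\br_k - \hr_k)\bigr\rangle,
\end{align*}
and expand $\F(\q + R_k^*\br_k) = \F(\q) + \langle \F'(\q), R_k^*\br_k\rangle + D_{\F}(\q + R_k^*\br_k, \q)$. Summing over $k$, the stable-decomposition identity~\eqref{stable1} collapses $\sumk R_k^*\br_k = \p-\q$, so the linear terms aggregate into $\langle \F'(\q), \p-\q\rangle$, which convexity of $\F$ bounds from above by $\F(\p)-\F(\q)$. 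The remaining cross terms I would discard using the optimality of $\hr_k$: since $\hr_k$ minimizes $\F(\q + R_k^*\,\cdot\,)$ over the convex feasible set $\{\r_k \in W_k : \q + R_k^*\r_k \in C\}$, and $\br_k$ lies in this set by~\eqref{stable2}, the variational inequality gives $\langle \F'(\q + R_k^*\hr_k), R_k^*(\br_k - \hr_k)\rangle \ge 0$.

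Substituting these bounds into the convexity inequality for $\F(\hq)$ and multiplying by $\tau$, the $\F(\q)$ coefficients collapse via $(1-N\tau)+(N-1)\tau = 1-\tau$, leaving $\tau\F(\p)$ together with the two families of Bregman terms in precisely the stated form. The main obstacle I anticipate is purely bookkeeping---tracking signs and coefficients so that the $N$ copies of $\F(\q)$ generated by the two Bregman expansions cancel cleanly against $(1-N\tau)\F(\q)$---since the analytic content reduces entirely to convexity, the Bregman definition, and a first-order optimality condition. An alternative, slightly more compact route would replace one of the two uses of~\eqref{Bregman} by the three-point identity in Lemma~\ref{Lem:three} applied to the triple $(\q + R_k^*\br_k,\; \q + R_k^*\hr_k,\; \q)$.
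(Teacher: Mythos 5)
Your proposal is correct and takes essentially the same route as the paper's proof: your convexity bound on $\F(\hq)$ is the paper's inequality~\eqref{descent2}, your Bregman expansion of $\F(\q + R_k^*\br_k)$ around $\q$ combined with~\eqref{stable1} and convexity is its~\eqref{descent3}, and your variational-inequality step that discards the cross terms is precisely the optimality condition $\F(\q + R_k^*\br_k) - \F(\q + R_k^*\hr_k) \geq D_{\F}(\q + R_k^*\br_k, \q + R_k^*\hr_k)$ underlying~\eqref{descent1}, which the paper imports from~\cite[Lemma~2.2]{LP:2019b} rather than reproving. The only cosmetic difference is that you derive that optimality inequality inline, and your bookkeeping does close, with the $\F(\q)$ coefficients combining as $(1 - N\tau) + N\tau - \tau = 1 - \tau$.
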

\begin{proof}
As $\q + R_k^* \br_k \in C$ by~\eqref{stable2}, from the optimality condition of $\hr_k$~(cf.~\cite[Lemma~2.2]{LP:2019b}), we have
\begin{equation*}
\F(\q + R_k^* \br_k ) - \F(\q + R_k^* \hr_k ) \geq D_{\F} (\q + R_k^* \br_k , \q + R_k^* \hr_k).
\end{equation*}
Summation of the above equation over $k=1, \dots, N$ yields
\begin{equation}
\label{descent1}
\tau \sumk \left( \F(\q + R_k^* \br_k) - \F(\q + R_k^* \hr_k )\right) \geq \tau \sumk D_{\F} (\q + R_k^* \br_k , \q + R_k^* \hr_k). 
\end{equation}
Note that
\begin{equation*}
\hq = \q + \tau \sumk R_k^* \hr_k = (1 - \tau N) \q + \tau \sumk (\q + R_k^* \hr_k ).
\end{equation*}
Since $1 - \tau N \geq 0$, we obtain by the convexity of $\F$ that
\begin{equation}
\label{descent2}
(1 - \tau N ) \F(\q) + \tau \sumk \F(\q + R_k^* \hr_k) \geq \F(\hq).
\end{equation}
On the other hand, by the definition of Bregman distance~\eqref{Bregman}, the condition~(iii), and the convexity of $\F$, we have
\begin{equation} \begin{split}
\label{descent3}
\tau \left( N \F(\q) - \sumk \F(\q + R_k^* \br_k) \right)
&= -\tau \sumk \left( \left< \F '(\q), R_k^* \br_k \right> + D_{\F} (\q + R_k^* \br_k, \q) \right) \\
&= -\tau \left< \F ' (\q), \p - \q \right> -  \tau \sumk D_{\F} (\q + R_k^* \br_k, \q) \\
&\geq -\tau (\F (\p) - \F (\q)) -  \tau \sumk D_{\F} (\q + R_k^* \br_k, \q).
\end{split} \end{equation}
Summation of~\eqref{descent1},~\eqref{descent2}, and~\eqref{descent3} completes the proof.
\end{proof}

With Lemma~\ref{Lem:descent}, the following property of sufficient decrease for Algorithm~\ref{Alg:ASM} is straightforward~(cf.~\cite[Lemma~3.3]{CTWY:2015}).

% Lemma: Sufficient decrease for ASM
\begin{lemma}
\label{Lem:decrease}
In Algorithm~\ref{Alg:ASM}, we have
\begin{equation*}
\F(\p^{(n)}) - \F(\p^{(n+1)}) \geq \frac{\tau}{2\beta} \sumk \| \div R_k^* \r_k^{(n+1)} \|^2 , \quad n \geq 0,
\end{equation*}
where $\beta$ was given in Assumption~\ref{Ass:regular}.
\end{lemma}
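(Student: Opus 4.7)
The plan is to apply Lemma~\ref{Lem:descent} to the specific choice $\p = \q = \p^{(n)}$. With this choice, the iterate $\hq$ produced by steps (i)--(ii) of Lemma~\ref{Lem:descent} coincides with $\p^{(n+1)}$ as defined in Algorithm~\ref{Alg:ASM}, and $\hr_k = \r_k^{(n+1)}$. For the third ingredient, the stable decomposition of $\p - \q = 0$, I would simply take $\br_k = 0$ for all $k$; this trivially satisfies \eqref{stable1}--\eqref{stable3} since $\q + R_k^* \cdot 0 = \p^{(n)} \in C$ by the iterate bookkeeping of Algorithm~\ref{Alg:ASM}.

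With these substitutions, the left-hand side of Lemma~\ref{Lem:descent} collapses to $\F(\p^{(n)}) - \F(\p^{(n+1)})$ and the right-hand side simplifies, since $D_{\F}(\q, \q) = 0$, to
\begin{equation*}
\tau \sumk D_{\F}\bigl(\p^{(n)},\, \p^{(n)} + R_k^* \r_k^{(n+1)}\bigr).
\end{equation*}
So all that remains is to bound each Bregman distance from below by $(1/(2\beta)) \| \div R_k^* \r_k^{(n+1)}\|^2$.

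This bound is the step I would handle next, and it is where Assumption~\ref{Ass:regular} (equivalently Proposition~\ref{Prop:regular}) enters. Since $\F(\p) = F^*(\div \p)$ and $\F'(\p) = \div^*(F^*)'(\div \p)$, a direct substitution into the definition \eqref{Bregman} gives $D_{\F}(\p, \q) = D_{F^*}(\div \p, \div \q)$. By Proposition~\ref{Prop:regular}, $F^*$ is $(1/\beta)$-strongly convex, and therefore $D_{F^*}(u, v) \geq \tfrac{1}{2\beta} \|u - v\|^2$ for all $u, v \in X_h$. Applying this with $u = \div \p^{(n)}$ and $v = \div(\p^{(n)} + R_k^* \r_k^{(n+1)})$ yields the desired per-subspace estimate.

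Summing over $k$ and combining with the conclusion of Lemma~\ref{Lem:descent} produces the claimed inequality. There is no real obstacle here; the only things to double-check are that $\br_k = 0$ is a valid stable decomposition of the zero vector (it is, by inspection of Assumption~\ref{Ass:stable}) and that Algorithm~\ref{Alg:ASM} indeed maintains $\p^{(n)} \in C$ (which follows by induction from $\tau \in (0, 1/N]$ and the convexity of $C$, since $\p^{(n+1)}$ is a convex combination of $\p^{(n)}$ and the points $\p^{(n)} + R_k^* \r_k^{(n+1)} \in C$).
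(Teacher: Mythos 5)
Your proposal is correct and takes essentially the same route as the paper: the paper's proof likewise substitutes $\p = \q = \p^{(n)}$ and $\br_k = 0$ into Lemma~\ref{Lem:descent} and then bounds each Bregman distance from below by $\frac{1}{2\beta} \| \div R_k^* \r_k^{(n+1)} \|^2$ using the $(1/\beta)$-strong convexity of $F^*$ from Proposition~\ref{Prop:regular}, exactly as you do. The extra verifications you record (that $\br_k = 0$ is a valid stable decomposition of the zero vector and that $\p^{(n)} \in C$ by induction, using $\tau \in (0, 1/N]$ and convexity of $C$) are left implicit in the paper but are correct and worth stating.
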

\begin{proof}
Substitute $\p$ by $\p^{(n)}$, $\q$ by $\p^{(n)}$, and $\br_k$ by $0$ in Lemma~\ref{Lem:descent}.
Then $\hr_k = \r_k^{(n+1)}$, $\hq = \p^{(n+1)}$, and we obtain
\begin{equation*}
\F(\p^{(n)}) - \F(\p^{(n+1)}) \geq \tau \sumk D_{\F} (\p^{(n)} , \p^{(n)} + R_k^* \r_k^{(n+1)}).
\end{equation*}
On the other hand, for any $k$, it follows by Proposition~\ref{Prop:regular} that
\begin{equation*} \begin{split}
D_{\F} &(\p^{(n)}, \p^{(n)} + R_k^* \r_k^{(n+1)}) \\
&= \F(\p^{(n)}) - \F(\p^{(n)} + R_k^* \r_k^{(n+1)}) - \left< \F' (\p^{(n)} + R_k^* \r_k^{(n+1)}), -R_k^* \r_k^{(n+1)} \right> \\
&= F^*(\div \p^{(n)}) - F^*(\div (\p^{(n)} + R_k^* \r_k^{(n+1)})) \\
&\quad  - \left< (F^*)' (\div (\p^{(n)} + R_k^* \r_k^{(n+1)}) ), - \div R_k^* \r_k^{(n+1)} \right> \\
&\geq \frac{1}{2\beta} \| \div R_k^* \r_k^{(n+1)} \|^2.
\end{split} \end{equation*}
Thus, we readily get the desired result.
\end{proof}

Using Lemmas~\ref{Lem:descent} and~\ref{Lem:decrease}, one can show the $O(1/n)$ convergence of Algorithm~\ref{Alg:ASM} as follows.

% Theorem: O(1/n) convergence of ASM
\begin{theorem}
\label{Thm:rate}
In Algorithm~\ref{Alg:ASM}, let $\zeta_n = \alpha ( \F(\p^{(n)}) - \F(\p^*) )$ for $n \geq 0$.
Then there exists a constant $\tc > 0$ depending only on $|\Omega|$, $\kappa$, $\tau$, $\zeta_0$, $c_1$, and $c_2$ such that
\begin{equation*}
\zeta_n \leq \frac{1}{\tc n + 1/\zeta_0},
\end{equation*}
where $\kappa$ was given in~\eqref{kappa} and $c_1$, $c_2$ were given in Assumption~\ref{Ass:stable}.
\end{theorem}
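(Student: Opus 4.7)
The plan is to apply Lemma~\ref{Lem:recur1} to $A_n := \zeta_n/\alpha = \F(\p^{(n)}) - \F(\p^*)$: producing a recurrence of the form $A_n - A_{n+1} \geq (1/c^2)(A_{n+1} - \gamma A_n)^2$ with $\gamma \in [0,1)$ and $c$ depending only on the allowed data will yield $A_n \leq 1/(\tc' n + 1/A_0)$, whence $\zeta_n \leq 1/(\tc n + 1/\zeta_0)$ after setting $\tc = \tc'/\alpha$ and checking that, when $A_0 = \zeta_0/\alpha$ and $\beta = \kappa\alpha$ are substituted, the factors of $\alpha$ cancel throughout. The starting point for producing the recurrence is Lemma~\ref{Lem:descent} applied with $\p = \p^*$, $\q = \p^{(n)}$ (so that $\hq = \p^{(n+1)}$ and $\hr_k = \r_k^{(n+1)}$) and $\{\br_k\}$ the stable decomposition of $\p^* - \p^{(n)}$ guaranteed by Assumption~\ref{Ass:stable}. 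After subtracting $\tau\F(\p^*)$ from both sides this rearranges to
\begin{equation*}
A_{n+1} - (1-\tau)A_n \leq \tau\sumk\bigl[D_{\F}(\p^{(n)}+R_k^*\br_k,\,\p^{(n)}) - D_{\F}(\p^{(n)}+R_k^*\br_k,\,\p^{(n)}+R_k^*\r_k^{(n+1)})\bigr].
\end{equation*}
I would then apply Lemma~\ref{Lem:three} with $\r = \p^{(n)}+R_k^*\br_k$, $\p = \p^{(n)}+R_k^*\r_k^{(n+1)}$, $\q = \p^{(n)}$ to recast each bracket as $\langle\F'(\p^{(n)}+R_k^*\r_k^{(n+1)})-\F'(\p^{(n)}),\,R_k^*\br_k\rangle - D_{\F}(\p^{(n)},\,\p^{(n)}+R_k^*\r_k^{(n+1)})$, drop the non-negative Bregman term, use the identity $\F'(\p) = \div^*((F^*)'(\div\p))$, and combine Cauchy--Schwarz with the $(1/\alpha)$-Lipschitz bound on $(F^*)'$ from Proposition~\ref{Prop:regular} to arrive at
\begin{equation*}
A_{n+1} - (1-\tau)A_n \leq \frac{\tau}{\alpha}\sqrt{\sumk\|\div R_k^*\r_k^{(n+1)}\|^2}\cdot\sqrt{\sumk\|\div R_k^*\br_k\|^2}.
\end{equation*}

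The two factors on the right are controlled separately. Lemma~\ref{Lem:decrease} bounds the first by $(2\beta/\tau)(A_n - A_{n+1})$. For the second, Assumption~\ref{Ass:stable} yields $\sumk\|\div R_k^*\br_k\|^2 \leq c_1\|\div(\p^*-\p^{(n)})\|^2 + c_2\|\p^*-\p^{(n)}\|^2$, and here I control $\|\div(\p^*-\p^{(n)})\|^2 \leq 2\beta A_n$ via the $(1/\beta)$-strong convexity of $F^*$ in Proposition~\ref{Prop:regular} together with the variational inequality $\langle\F'(\p^*),\p^{(n)}-\p^*\rangle \geq 0$ (which holds since $\p^{(n)} \in C$), and $\|\p^*-\p^{(n)}\|^2 \leq 8|\Omega|$ by Lemma~\ref{Lem:L2_C}. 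Because Lemma~\ref{Lem:decrease} also forces $\{A_n\}$ to be non-increasing, $A_n$ may be replaced by $A_0$ in this second bound. Squaring the displayed inequality above then produces exactly the hypothesis of Lemma~\ref{Lem:recur1} with $\gamma = 1-\tau \in [0,1)$ and $c^2 = (2\tau\beta/\alpha^2)(2c_1\beta A_0 + 8c_2|\Omega|)$, and invoking that lemma followed by the rescaling $\zeta_n = \alpha A_n$ closes the proof; direct inspection shows that $\alpha c^2 = 4\tau\kappa(c_1\kappa\zeta_0 + 4c_2|\Omega|)$ and $\sqrt{\alpha}\,c$ depend only on $|\Omega|$, $\kappa$, $\tau$, $\zeta_0$, $c_1$, $c_2$, so the resulting $\tc$ does too.

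The main obstacle is the manipulation at the very first step. Because the subdomains overlap, the stable decomposition $\{\br_k\}$ of $\p^* - \p^{(n)}$ has no direct relationship with the local minimizers $\{\r_k^{(n+1)}\}$, so the reference points $\p^{(n)}+R_k^*\br_k$ and $\p^{(n)}+R_k^*\r_k^{(n+1)}$ are distinct and one cannot bound the Bregman difference appearing in Lemma~\ref{Lem:descent} termwise by the sufficient decrease of Lemma~\ref{Lem:decrease}. The three-point identity of Lemma~\ref{Lem:three} is precisely what converts this difference into a single $\F'$-gradient inner product amenable to Cauchy--Schwarz; without it, the naive bookkeeping delivers only the weaker estimate $A_{n+1} \leq (1 - \tau + \tau c_1\kappa)A_n + 4\tau c_2|\Omega|/\alpha$, which fits Lemma~\ref{Lem:recur2} (giving the pseudo-linear behaviour to be exploited elsewhere in the paper) but not the $O(1/n)$ rate asserted here.
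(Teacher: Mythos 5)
Your proposal is correct and takes essentially the same route as the paper's proof: Lemma~\ref{Lem:descent} with $(\p,\q)=(\p^*,\p^{(n)})$, the three-point identity of Lemma~\ref{Lem:three} to drop a Bregman term and expose the gradient inner product, Cauchy--Schwarz with Proposition~\ref{Prop:regular}, the bound $c_1\|\div(\p^{(n)}-\p^*)\|^2+c_2\|\p^{(n)}-\p^*\|^2 \leq 2\kappa c_1\zeta_0+8c_2|\Omega|$ via Assumption~\ref{Ass:stable}, Lemma~\ref{Lem:L2_C} and monotonicity, the sufficient decrease of Lemma~\ref{Lem:decrease}, and finally Lemma~\ref{Lem:recur1} with $\gamma=1-\tau$; your constant $\alpha c^2 = 4\tau\kappa(\kappa c_1\zeta_0+4c_2|\Omega|)$ agrees exactly with the paper's $2\tau\kappa c_3$. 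The only differences are cosmetic: you work with the unscaled errors $A_n=\zeta_n/\alpha$ and rescale at the end, and you spell out the step $\|\div(\p^{(n)}-\p^*)\|^2\leq 2\beta A_n$ (strong convexity of $F^*$ combined with the variational inequality $\langle \F'(\p^*),\p^{(n)}-\p^*\rangle\geq 0$, valid since $\p^{(n)}\in C$ by induction), which the paper attributes to Proposition~\ref{Prop:regular} without detail.
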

\begin{proof}
In Lemma~\ref{Lem:descent}, set $\p$ by $\p^*$, $\q$ by $\p^{(n)}$, and $\hr_s$ by $\r_s^{(n+1)}$.
Then $\hq = \p^{(n+1)}$.
We obtain
\begin{equation*} \begin{split}
&\quad \tau \F(\p^*) + (1-\tau) \F(\p^{(n)}) - \F(\p^{(n+1)}) \\
&\geq \tau \sumk \left( D_{\F} (\p^{(n)} + R_k^* \br_k , \p^{(n)} + R_k^* \r_k^{(n+1)}) - D_{\F} (\p^{(n)} + R_k^* \br_k , \p^{(n)}) \right)\\
&= \tau \sumk \left( D_{\F} (\p^{(n)} , \p^{(n)} + R_k^* \r_k^{(n+1)}) - \left< \F ' (\p^{(n)} + R_k^* \r_k^{(n+1)}) - \F ' (\p^{(n)}) , R_k^* \br_k \right> \right),
\end{split} \end{equation*}
where the last equality is due to Lemma~\ref{Lem:three}.
It follows by Proposition~\ref{Prop:regular} and the Cauchy--Schwarz inequality that
\begin{equation} \begin{split}
\label{rate1}
\zeta_{n+1} &- (1-\tau) \zeta_n \leq - \tau \alpha \sumk \Big( D_{\F} (\p^{(n)} , \p^{(n)} + R_k^* \r_k^{(n+1)}) \\
&\hspace{3cm} -  \left< \F ' (\p^{(n)} + R_k^* \r_k^{(n+1)}) - \F ' (\p^{(n)}) , R_k^* \br_k \right> \Big) \\
&\leq \tau \alpha \sumk \left< \F ' (\p^{(n)} + R_k^* \r_k^{(n+1)}) - \F ' (\p^{(n)}) , R_k^* \br_k \right> \\
&= \tau \alpha \sumk \left< (F^*)' (\div (\p^{(n)} + R_k^* \r_k^{(n+1)})) - (F^*)' (\div \p^{(n)}), \div R_k^* \br_k \right> \\
&\leq \tau \sumk \| \div R_k^* \r_k^{(n+1)} \| \| \div R_k^* \br_k \| \\
&\leq \tau  \left( \sumk \| \div R_k^* \r_k^{(n+1)} \|^2 \right)^{\frac{1}{2}} \left( \sumk \| \div R_k^* \br_k \|^2 \right)^{\frac{1}{2}} .
\end{split} \end{equation}
By~\eqref{stable3}, Proposition~\ref{Prop:regular}, and Lemma~\ref{Lem:L2_C}, we have
\begin{equation} \begin{split}
\label{rate2}
\sumk \| \div R_k^* \br_k \|^2 &\leq c_1 \| \div (\p^{(n)} - \p^* ) \|^2 + c_2 \| \p^{(n)} - \p^* \|^2 \\
&\leq 2\kappa c_1 \zeta_n + 8c_2 |\Omega| \\
&\leq 2\kappa c_1 \zeta_0 + 8c_2 |\Omega| =: c_3.
\end{split} \end{equation}
Thus, it is satisfied by~\eqref{rate1} and~\eqref{rate2} that
\begin{equation}
\label{rate4}
\zeta_{n+1} - (1-\tau) \zeta_n \leq \tau c_3^{\frac{1}{2}} \left( \sumk \| \div R_k^* \r_k^{(n+1)} \|^2 \right)^{\frac{1}{2}}.
\end{equation}
Combining~\eqref{rate4} with Lemma~\ref{Lem:decrease}, we get
\begin{equation}
\label{rate3}
\zeta_n - \zeta_{n+1} \geq \frac{1}{2\tau \kappa c_3} \left[\zeta_{n+1} - (1-\tau) \zeta_n \right]^2.
\end{equation}
Finally, invoking Lemma~\ref{Lem:recur1} to~\eqref{rate3} completes the proof.
\end{proof}

To see the dependency of the convergence of Algorithm~\ref{Alg:ASM} on parameters, we do some additional calculations starting from~\eqref{rate3}.
By Lemma~\ref{Lem:recur1}, we have
\begin{equation*}
\gamma = 1-\tau \quad\textrm{and}\quad c = \sqrt{2\tau \kappa c_3}= 2 \sqrt{\tau \kappa (\kappa c_1 \zeta_0 + 4c_2 |\Omega|)},
\end{equation*}
so that
\begin{equation} \begin{split}
\label{const}
\frac{1}{\tilde{c}} &= 2\zeta_0 + \left(\frac{(1-\tau) \sqrt{\zeta_0} + 2\sqrt{\tau \kappa (\kappa c_1 \zeta_0 + 4c_2 |\Omega|)} }{\tau} \right)^2 \\
&\leq 2\left[ 1 + \left(\frac{1-\tau}{\tau}\right)^2 + \frac{4 \kappa^2 c_1}{\tau } \right]\zeta_0 + \frac{32 \kappa c_2 |\Omega|}{\tau}.
\end{split} \end{equation}
Hence, the constant $\tilde{c}$ in Theorem~\ref{Thm:rate} depends on $|\Omega|$, $\kappa$, $\tau$, $\zeta_0$, $c_1$, and $c_2$ only.
Moreover, we observe that~\eqref{const} is decreasing with respect to $\tau \in (0, 1/N]$.
Hence, we may choose $\tau = 1/N$; see also~\cite[Remark~3.1]{CTWY:2015}.

% Remark: CTWY:2015
\begin{remark}
\label{Rem:CTWY}
In the ROF case~\eqref{ROF},  alternatively to Theorem~\ref{Thm:rate}, one can prove the $O(1/n)$ convergence rate of Algorithm~\ref{Alg:ASM} by a similar argument as in~\cite{CTWY:2015}.
Compared to~\cite{CTWY:2015}, our proof is simple due to the descent rule, Lemma~\ref{Lem:descent}.
Moreover, our estimate is independent of $N$ while that in~\cite{CTWY:2015} is not.
\end{remark}

In addition to the $O(1/n)$ convergence, we prove that Algorithm~\ref{Alg:ASM} converges pseudo-linearly, i.e., $\F(\p^{(n)})$ decreases as fast as linear convergence until it reaches a particular value.
Theorem~\ref{Thm:pseudo} provides a rigorous statement for the pseudo-linear convergence of Algorithm~\ref{Alg:ASM}.

% Theorem: Pseudo-linear convergence of ASM
\begin{theorem}
\label{Thm:pseudo}
In Algorithm~\ref{Alg:ASM}, let $\zeta_n = \alpha ( \F(\p^{(n)}) - \F(\p^*) )$ for $n \geq 0$.
Then we have
\begin{equation*}
\zeta_n \leq \left(1 - \frac{\tau }{\kappa^2 (\sqrt{c_1} + \sqrt{c_1 + \kappa^{-2}})^2} \right)^n \left( \zeta_0 - \frac{4c_2 |\Omega|}{\kappa \sqrt{c_1(c_1+ \kappa^{-2})}} \right) + \frac{4c_2 |\Omega|}{\kappa \sqrt{c_1(c_1+ \kappa^{-2})}},
\end{equation*}
where $\kappa$ was given in~\eqref{kappa} and $c_1$, $c_2$ were given in Assumption~\ref{Ass:stable}.
\end{theorem}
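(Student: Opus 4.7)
The plan is to refine the chain of estimates \eqref{rate1}--\eqref{rate3} from the proof of Theorem~\ref{Thm:rate} by preserving the $\zeta_n$-dependence that was collapsed into $\zeta_0$ at inequality \eqref{rate2}. Concretely, I will combine \eqref{rate1} with Lemma~\ref{Lem:decrease} and with the bound $\sumk \| \div R_k^* \br_k \|^2 \leq 2\kappa c_1 \zeta_n + 8 c_2 |\Omega|$, which follows from Assumption~\ref{Ass:stable}, Proposition~\ref{Prop:regular}, and Lemma~\ref{Lem:L2_C} exactly as in \eqref{rate2} but \emph{without} the final step $\zeta_n \leq \zeta_0$. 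This produces
\begin{equation*}
(\zeta_{n+1} - (1-\tau) \zeta_n)^2 \leq 2 \tau \kappa (\zeta_n - \zeta_{n+1}) (2 \kappa c_1 \zeta_n + 8 c_2 |\Omega|).
\end{equation*}

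Next I will linearize this inequality in $\zeta_{n+1}$. Lemma~\ref{Lem:decrease} ensures that $\zeta_n - \zeta_{n+1} \geq 0$, so the one-sided square root of the displayed inequality is valid, and I apply the weighted arithmetic--geometric mean bound $\sqrt{XY} \leq X/(2a) + aY/2$ with a free parameter $a > 0$. Collecting terms and reparametrizing via $t = \tau \kappa / a$ yields the affine recurrence $\zeta_{n+1} \leq \gamma(t) \zeta_n + c(t)$ with
\begin{equation*}
\gamma(t) = 1 + \frac{\tau (\kappa^2 c_1 - t)}{t(1+t)}, \qquad c(t) = \frac{4 \tau \kappa c_2 |\Omega|}{t(1+t)}.
\end{equation*}

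I will then minimize $\gamma(t)$ over $t > 0$. Differentiation gives the critical equation $t^2 = \kappa^2 c_1 (2t + 1)$, whose positive root is $t^* = u(u+v)$ with $u = \kappa \sqrt{c_1}$ and $v = \sqrt{\kappa^2 c_1 + 1}$. The identity $v^2 - u^2 = 1$ then allows the simplification $1 + t^* = v^2 + uv = v(u+v)$, so $t^* (1+t^*) = uv (u+v)^2$. Substituting back yields
\begin{equation*}
1 - \gamma(t^*) = \frac{\tau}{(u+v)^2} = \frac{\tau}{\kappa^2 (\sqrt{c_1} + \sqrt{c_1 + \kappa^{-2}})^2}, \qquad \frac{c(t^*)}{1 - \gamma(t^*)} = \frac{4 \kappa c_2 |\Omega|}{uv} = \frac{4 c_2 |\Omega|}{\kappa \sqrt{c_1 (c_1 + \kappa^{-2})}},
\end{equation*}
which match the constants appearing in the statement. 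An application of Lemma~\ref{Lem:recur2} to $\zeta_{n+1} \leq \gamma(t^*) \zeta_n + c(t^*)$ then closes the proof.

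The main obstacle is bookkeeping rather than any essentially new estimate: the AM--GM step introduces a free parameter controlling both the contraction rate and the threshold, and the closed-form expressions in the statement materialize only after the optimization is carried out and the identities $v^2 - u^2 = 1$ and $1 + u(u+v) = v(u+v)$ are invoked to rewrite $t^*(1+t^*)$ in factored form. Once the algebra has been arranged in terms of $u$ and $v$, everything else is mechanical.
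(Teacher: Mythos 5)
Your proposal is correct and takes essentially the same route as the paper's proof: starting from \eqref{rate1} and the refinement of \eqref{rate2} that keeps $\zeta_n$ in place of $\zeta_0$, combining with Lemma~\ref{Lem:decrease}, performing a Young/AM--GM split with a free parameter, optimizing that parameter, and closing with Lemma~\ref{Lem:recur2}. In fact your parametrization is the paper's in disguise---under the substitution $t = \kappa^2 c_1/\epsilon$ your affine recurrence coincides term by term with the paper's \eqref{pseudo1} after rearrangement, and your optimizer $t^* = u(u+v)$ is exactly the paper's choice $\epsilon = \kappa^2\bigl(\sqrt{c_1(c_1+\kappa^{-2})}-c_1\bigr)$, yielding identical rate and threshold constants.
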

\begin{proof}
Take any $n \geq 0$.
For the sake of convenience, we write
\begin{equation*}
\Delta = \frac{1}{2} \sumk \| \div R_k^* \r_k^{(n+1)} \|^2.
\end{equation*}
The starting points of the proof are~\eqref{rate1} and~\eqref{rate2}:
\begin{equation*}
\zeta_{n+1} \leq (1 - \tau) \zeta_n + \tau (2\kappa c_1 \zeta_n + 8 c_2 |\Omega|)^{\frac{1}{2}}(2\Delta)^{\frac{1}{2}}.
\end{equation*}
Using the inequality
\begin{equation*}
ab \leq \epsilon a^2 + \frac{1}{4\epsilon}b^2, \quad 0 < \epsilon < 1,
\end{equation*}
we readily get
\begin{equation} \begin{split}
\label{pseudo1}
\zeta_{n+1} &\leq (1 - \tau) \zeta_n + \tau \left( \zeta_n + \frac{4c_2 |\Omega|}{\kappa c_1} \right)^{\frac{1}{2}} \left( 4\kappa c_1 \Delta \right)^{\frac{1}{2}} \\
&\leq (1 - \tau) \zeta_n + \tau \left[\epsilon \left( \zeta_n + \frac{4c_2 |\Omega|}{\kappa c_1} \right) + \frac{1}{4\epsilon} \cdot 4\kappa c_1 \Delta \right] \\
&= (1-\tau + \tau \epsilon) \zeta_n + \frac{\tau  \kappa c_1}{\epsilon} \Delta +  \frac{4 \tau \epsilon c_2 |\Omega|}{\kappa c_1} \\
&\leq (1-\tau + \tau \epsilon) \zeta_n + \frac{ \kappa^2 c_1}{\epsilon} (\zeta_n - \zeta_{n+1}) +  \frac{4 \tau \epsilon c_2 |\Omega|}{\kappa c_1},
\end{split} \end{equation}
where the last inequality is due to Lemma~\ref{Lem:decrease}.
The equation~\eqref{pseudo1} can be rewritten as
\begin{equation*}
\zeta_{n+1} \leq \left( 1 - \frac{\tau \epsilon ( 1 - \epsilon )}{ \epsilon + \kappa^2 c_1} \right) \zeta_n + \frac{4\tau \epsilon^2 c_2 |\Omega|}{\kappa c_1 ( \epsilon + \kappa^2 c_1 )}.
\end{equation*}
We take
\begin{equation*}
\epsilon = \kappa^2 \left( \sqrt{ c_1 \left( c_1 + \kappa^{-2} \right)} - c_1 \right) \in (0, 1)
\end{equation*}
which maximizes $\frac{\epsilon ( 1 - \epsilon )}{ \epsilon + \kappa^2 c_1}$ so that
\begin{align*}
\frac{ \epsilon ( 1 - \epsilon )}{ \epsilon + \kappa^2 c_1} &= \frac{1}{ \kappa^2 (\sqrt{c_1} + \sqrt{c_1 + \kappa^{-2}})^2 }, \\
\frac{ \epsilon^2}{ c_1 (\epsilon + \kappa^2 c_1 ) } &= \frac{1}{\kappa^2 \sqrt{c_1 ( c_1 + \kappa^{-2})} (\sqrt{c_1} + \sqrt{c_1 + \kappa^{-2} })^2 }.
\end{align*}
We note that similar computations were done in~\cite{BTW:2003,Tai:2003}.
Then it follows that
\begin{equation*}
\zeta_{n+1} \leq \left( 1 - \frac{\tau}{\kappa^2 (\sqrt{c_1} + \sqrt{c_1 + \kappa^{-2}})^2 } \right) \zeta_n + \frac{4\tau c_2 |\Omega|}{\kappa^3 \sqrt{c_1 ( c_1 + \kappa^{-2})} (\sqrt{c_1} + \sqrt{c_1 + \kappa^{-2}})^2 }.
\end{equation*}
By Lemma~\ref{Lem:recur2}, we have
\begin{equation*}
\zeta_n \leq \left(1 - \frac{\tau }{\kappa^2 (\sqrt{c_1} + \sqrt{c_1 + \kappa^{-2}})^2} \right)^n \left( \zeta_0 - \frac{4c_2 |\Omega|}{\kappa \sqrt{c_1(c_1+ \kappa^{-2})}} \right) + \frac{4c_2 |\Omega|}{\kappa \sqrt{c_1(c_1+ \kappa^{-2})}},
\end{equation*}
which is the desired result.
\end{proof}

By Theorem~\ref{Thm:pseudo}, $\{ \F(\p^{(n)}) \}$ in Algorithm~\ref{Alg:ASM} converges pseudo-linearly with threshold $4c_2 |\Omega|/\kappa \sqrt{c_1 (c_1 +\kappa^{-2} )}$.
It means that if one can make $c_2 |\Omega| / c_1$ sufficiently small, then the proposed method shows almost the same convergence pattern as a linearly convergent algorithm.
We shall consider in Section~\ref{Sec:DD} that how to make $c_2 |\Omega| / c_1$ small, and observe the behavior of the proposed method in Section~\ref{Sec:Applications}.

% Section: Domain decomposition
\section{Domain decomposition}
\label{Sec:DD}
In this section, we present overlapping domain decomposition settings for the proposed DDM.
We decompose the domain $\Omega$ into $\N$ disjoint square subdomains $\{ \Omega_s \}_{s=1}^{\N}$ in a checkerboard fashion.
The side length of each subdomain $\Omega_s$ is denoted by $H$.
For each $s=1, \dots, \N$, let $\Omega_s'$ be an enlarged subdomain consisting of $\Omega_s$ and its surrounding layers of pixels with width $\delta$ for some $\delta > 0$.
Overlapping subdomains $\{ \Omega_s' \}_{s=1}^{\N}$ can be colored with $N_c \leq 4$ colors such that any two subdomains can be of the same color if they are disjoint~\cite{CTWY:2015}.
Let $S_k$ be the union of all subdomains $\Omega_s'$ with color $k$ for $k = 1, \dots, N_c$.
We denote the collection of all elements of $\T_h$ in $S_k$ by $\T_{h,k}$.
In what follows, for two positive real numbers $A$ and $B$ depending on the parameters $|\Omega|$, $H$, $h$, and $\delta$, we use the notation $A \lesssim B$ to represent that there exists a constant $c > 0$ such that $A \leq c B$, where $c$ is independent of the parameters.
In addition, we write $A \approx B$ if $A \lesssim B$ and $B \lesssim A$.

We consider a DDM based on the domain decomposition $\{ \Omega_s' \}$.
Let $N = N_c$ and for $k = 1, \dots, N$, we set $W_k = Y_h (S_k )$, where
\begin{equation}
\label{1L}
Y_h (S_k ) = \left\{ \p \in H_0 (\div; S_k ) : \p |_T \in \mathcal{RT}_0 (T) \quad \forall T \in \T_{h,k} \right\},
\end{equation}
where $\mathcal{RT}_0 (T)$ was defined in~\eqref{RT}.

By Lemma~3.4 in~\cite{TW:2005}, there exists a continuous and piecewise linear partition of unity $\{ \theta_k \}_{k=1}^{N}$ for $\Omega$ subordinate to the covering $\{ S_k \}_{k=1}^{N}$ such that
\begin{subequations}
\label{pou}
\begin{equation}
\supp \theta_k \subset \bar{S}_k, \quad \theta_k \in W_0^{1, \infty} (S_k),
\end{equation}
\begin{equation}
0 \leq \theta_k \leq 1, \quad \sum_{k=1}^{N} \theta_k = 1 \textrm{ in } \Omega,
\end{equation}
\begin{equation}
\| \nabla \theta_k \|_{L^{\infty}(S_k)} \lesssim \frac{1}{\delta},
\end{equation}
\end{subequations}
where $\bar{S}_k$ is the closure of $S_k$ and $W_0^{1, \infty}(S_k)$ is defined as
\begin{equation*}
W_0^{1, \infty}(S_k) = \left\{ \theta \in L^{\infty} (S_k ) : \nabla \theta \in L^{\infty} (S_k ) \textrm{ and } \theta|_{\partial S_k} = 0 \right\}.
\end{equation*}
One can show the following property of $\theta_k$.

% Proposition: POU bound
\begin{proposition}
\label{Prop:pou}
For $1 \leq k \leq N$ and $\p \in Y_h$, we have
\begin{equation*}
\| \div (\Pi_h (\theta_k \p)) \|^2 \lesssim \| \div \p \|^2 + \frac{1}{\delta^2} \| \p \|^2.
\end{equation*}
\end{proposition}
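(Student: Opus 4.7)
The plan is to reduce the claim to an $L^2$ estimate on $\theta_k \p$ via Lemma~\ref{Lem:interpolation}, then expand with the product rule for divergence and use the bounds on $\theta_k$ coming from the partition-of-unity properties~\eqref{pou}.

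First I would apply Lemma~\ref{Lem:interpolation} with $\theta = \theta_k$ and $S = S_k$: since $\theta_k$ is continuous, piecewise linear, and supported in $S_k$, and since $\p \in Y_h$, the lemma gives
\begin{equation*}
\| \div (\Pi_h (\theta_k \p)) \|_{X_h}^2 = \int_{S_k} [\div (\Pi_h (\theta_k \p))]^2 \, dx \lesssim \int_{S_k} [\div (\theta_k \p)]^2 \, dx.
\end{equation*}
(Note that $\div(\Pi_h(\theta_k \p))$ is supported in $S_k$ because $\theta_k$ vanishes outside $S_k$, so extending integration from $S_k$ to $\Omega$ is harmless, and Lemma~\ref{Lem:norm} lets me identify $\|\cdot\|_{X_h}$ with the $L^2$-norm.)

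Next I would use the classical product rule $\div(\theta_k \p) = \nabla \theta_k \cdot \p + \theta_k \div \p$, valid pointwise a.e.\ on $S_k$ since $\theta_k \in W^{1,\infty}_0(S_k)$ and $\p|_T \in \mathcal{RT}_0(T)$ is smooth on each element. Then by the triangle inequality followed by the $L^\infty$-bounds $0 \leq \theta_k \leq 1$ and $\|\nabla \theta_k\|_{L^\infty(S_k)} \lesssim 1/\delta$ from~\eqref{pou},
\begin{equation*}
\int_{S_k} [\div(\theta_k \p)]^2 \, dx \lesssim \frac{1}{\delta^2} \int_{S_k} |\p|^2 \, dx + \int_{S_k} (\div \p)^2 \, dx.
\end{equation*}

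Finally I would convert back to the discrete norms used in the statement: $\int_{S_k} (\div \p)^2 \, dx \leq \|\div \p\|_{X_h}^2$ by Lemma~\ref{Lem:norm}, while $\int_{S_k} |\p|^2 \, dx \leq \|\p\|_{(L^2(\Omega))^2}^2 \lesssim \|\p\|_{Y_h}^2$ by the equivalence part of Lemma~\ref{Lem:norm}. Combining these with the two displays above yields exactly the claimed inequality.

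There is no real obstacle here; the argument is completely routine once one has Lemma~\ref{Lem:interpolation} in hand. The only thing that warrants a moment's care is that $\theta_k \p$ is not itself in $Y_h$ (the product of a piecewise linear scalar and a Raviart--Thomas field is not of the form~\eqref{RT} on an element), which is precisely why one must pass through the nodal interpolant $\Pi_h$ and invoke the stability estimate of Lemma~\ref{Lem:interpolation} rather than arguing directly.
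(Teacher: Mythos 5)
Your proof is correct and follows essentially the same route as the paper's: apply Lemma~\ref{Lem:interpolation} to pass from $\Pi_h(\theta_k \p)$ to $\theta_k \p$, expand with the product rule for $\div$, invoke the bounds~\eqref{pou} on $\theta_k$ and $\nabla \theta_k$, and use Lemma~\ref{Lem:norm} to return to the discrete norms. Your additional remarks on the support of $\div (\Pi_h (\theta_k \p))$ and on why one cannot avoid the interpolant $\Pi_h$ are accurate refinements of the same argument, not a different approach.
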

\begin{proof}
Invoking~\eqref{pou}, Lemmas~\ref{Lem:interpolation} and~\ref{Lem:norm} yields
\begin{equation*} \begin{split}
\| \div (\Pi_h (\theta_k \p)) \|^2 &= \int_{\Omega} [ \div (\Pi_h (\theta_k \p)) ]^2 \,dx \\
& \lesssim \int_{\Omega} [\div (\theta_k \p )]^2 \,dx \\
&\lesssim \int_{\Omega} [\nabla \theta_k \cdot \p]^2 \,dx + \int_{\Omega} [\theta_k \div \p]^2 \,dx  \\
&\lesssim \frac{1}{\delta^2} \int_{\Omega} |\p|^2 \,dx + \int_{\Omega} (\div \p)^2 \,dx \\
&\lesssim \frac{1}{\delta^2} \| \p \|^2 + \| \div \p \|^2.
\end{split} \end{equation*}
We note that a similar calculation was done in~\cite[Lemma~3.2]{CTWY:2015}.
\end{proof}

Using Proposition~\ref{Prop:pou}, the following stable decomposition estimate is obtained.

% Lemma: 1L stable decomposition
\begin{lemma}
\label{Lem:1L}
In the space decomposition setting~\eqref{1L}, Assumption~\ref{Ass:stable} holds with
\begin{equation*}
c_1 \approx 1, \quad c_2 \lesssim \frac{1}{\delta^2}.
\end{equation*}
\end{lemma}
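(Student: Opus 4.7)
The natural strategy is to use a partition-of-unity decomposition of $\p - \q$ based on $\{\theta_k\}_{k=1}^{N}$ from~\eqref{pou}. Specifically, for each $k = 1, \dots, N$, I would set
\begin{equation*}
R_k^* \r_k = \Pi_h \bigl( \theta_k (\p - \q) \bigr),
\end{equation*}
which lies in $W_k = Y_h(S_k)$ since $\theta_k$ is supported in $\bar{S}_k$ and vanishes on $\partial S_k \setminus \partial \Omega$ (while $\p - \q$ has vanishing normal trace on $\partial \Omega$). The remaining task is to verify the three requirements~\eqref{stable1}--\eqref{stable3} of Assumption~\ref{Ass:stable} with $c_1 \approx 1$ and $c_2 \lesssim 1/\delta^2$.

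Condition~\eqref{stable1} would follow from linearity of $\Pi_h$ together with $\sum_k \theta_k \equiv 1$: since the normal component of any element of $Y_h$ is constant along every edge, one has $\Pi_h \mathbf{v} = \mathbf{v}$ for all $\mathbf{v} \in Y_h$, and therefore $\sum_k R_k^* \r_k = \Pi_h(\p - \q) = \p - \q$. Condition~\eqref{stable3} is a direct application of Proposition~\ref{Prop:pou}, which gives
\begin{equation*}
\| \div R_k^* \r_k \|^2 \lesssim \| \div (\p - \q) \|_{L^2(S_k)}^2 + \frac{1}{\delta^2} \| \p - \q \|_{L^2(S_k)}^2 .
\end{equation*}
Summing over $k$ and using that each point of $\Omega$ belongs to at most $N = N_c \leq 4$ of the sets $S_k$, together with the norm equivalence in Lemma~\ref{Lem:norm} to pass from the $(L^2(\Omega))^2$-norm to the discrete $Y_h$-norm, yields the asserted $c_1 \approx 1$ and $c_2 \lesssim 1/\delta^2$.

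The delicate step---and the one where I expect the main subtlety to lie---is the discrete constraint preservation~\eqref{stable2}. The key observation is again that the normal component of any $\mathbf{v} \in Y_h$ is constant along each edge $e$, equal to $(\mathbf{v})_e$. Using this on the defining integral of $\Pi_h$,
\begin{equation*}
(R_k^* \r_k)_e = \frac{1}{|e|} \int_e \theta_k (\p - \q) \cdot \n_e \, ds = \bar{\theta}_{k, e} \bigl( (\p)_e - (\q)_e \bigr), \qquad \bar{\theta}_{k, e} := \frac{1}{|e|} \int_e \theta_k \, ds \in [0, 1].
\end{equation*}
Consequently $(\q + R_k^* \r_k)_e = (1 - \bar{\theta}_{k, e}) (\q)_e + \bar{\theta}_{k, e} (\p)_e$ is a convex combination of numbers in $[-1, 1]$ and hence itself lies in $[-1, 1]$, so $\q + R_k^* \r_k \in C$. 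Without the constancy of the Raviart--Thomas normal trace along each edge, truncating $\p - \q$ by the smooth weight $\theta_k$ and then interpolating would generically break the pointwise constraint, so this is the essential structural point I would emphasize in the write-up.
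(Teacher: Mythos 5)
Your proposal is correct and follows exactly the paper's route: the paper's proof uses the identical decomposition $R_k^* \r_k = \Pi_h(\theta_k(\p - \q))$, verifies~\eqref{stable3} via Proposition~\ref{Prop:pou}, and simply declares~\eqref{stable1} and~\eqref{stable2} ``obvious.'' Your edge-average argument $(\q + R_k^* \r_k)_e = (1 - \bar{\theta}_{k,e})(\q)_e + \bar{\theta}_{k,e}(\p)_e$, resting on the constancy of the Raviart--Thomas normal trace along each edge, is precisely the justification the paper leaves implicit, so your write-up is a valid (indeed more complete) rendering of the same proof.
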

\begin{proof}
Clearly, we have $c_1 \geq 1$.
Take any $\p, \q \in C$.
For $k = 1, \dots, N$, we define $\r_k \in Y_k$ by
\begin{equation*}
R_k^* \r_k = \Pi_h (\theta_k (\p - \q)).
\end{equation*}
It is obvious that $\{ \r_k \}$ satisfies~\eqref{stable1} and~\eqref{stable2}.
By Proposition~\ref{Prop:pou}, we get
\begin{equation*}
\| \div R_k^* \r_k \|^2 \lesssim \| \div ( \p - \q ) \|^2 + \frac{1}{\delta^2} \| \p - \q \|^2 .
\end{equation*} 
Summing the above equation over all $k$ yields~\eqref{stable3} with $c_1 \lesssim 1$ and $c_2 \lesssim 1/\delta^2$.
\end{proof}

% Remark: Nonoverlapping DD
\begin{remark}
\label{Rem:nonoverlapping}
Lemma~\ref{Lem:1L} cannot be applied to the nonoverlapping case~($\delta = 0$) since $1/ \delta^2 \rightarrow \infty$ as $\delta \rightarrow 0$.
On the other hand, in a finite difference discretization given in~\cite{Chambolle:2004,LP:2019b}, it can be proved that the nonoverlapping decomposition satisfies Assumption~\ref{Ass:stable} with a similar argument to~\cite[Lemma~3.5]{LP:2019b}.
\end{remark}

Combining Theorem~\ref{Thm:pseudo} and Lemma~\ref{Lem:1L}, we get the following result.

% Corollary: 1L ASM
\begin{corollary}
\label{Cor:1L}
For fixed $\tau > 0$, Algorithm~\ref{Alg:ASM} with the domain decomposition~\eqref{1L} converges pseudo-linearly at rate $\gamma$ with threshold $\epsilon > 0$ such that $\epsilon \lesssim |\Omega|/\delta^2$ and $\gamma$ is independent of $|\Omega|$, $H$, $h$, and $\delta$.
\end{corollary}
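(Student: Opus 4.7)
The plan is to combine Theorem~\ref{Thm:pseudo} directly with the stable decomposition bounds on $c_1$ and $c_2$ supplied by Lemma~\ref{Lem:1L}. Since Theorem~\ref{Thm:pseudo} already gives the pseudo-linear bound
\begin{equation*}
\zeta_n \leq \left(1 - \frac{\tau}{\kappa^2 (\sqrt{c_1} + \sqrt{c_1 + \kappa^{-2}})^2}\right)^n \left( \zeta_0 - \frac{4c_2 |\Omega|}{\kappa \sqrt{c_1(c_1+ \kappa^{-2})}} \right) + \frac{4c_2 |\Omega|}{\kappa \sqrt{c_1(c_1+ \kappa^{-2})}}
\end{equation*}
in full generality, the task reduces to tracking how the rate constant $\gamma$ and the threshold $\epsilon$ depend on the geometric parameters once the specific stable decomposition of Section~\ref{Sec:DD} is inserted.

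First, I would identify the contraction factor. With the one-level decomposition~\eqref{1L}, Lemma~\ref{Lem:1L} supplies constants $c_1 \approx 1$ (so that $c_1$ is bounded above and below by constants independent of $|\Omega|$, $H$, $h$, $\delta$) and $c_2 \lesssim 1/\delta^2$. Substituting $c_1 \approx 1$ into the rate
\begin{equation*}
\gamma = 1 - \frac{\tau}{\kappa^2 (\sqrt{c_1} + \sqrt{c_1 + \kappa^{-2}})^2}
\end{equation*}
shows that $\gamma$ depends only on $\tau$ and $\kappa$; in particular, it is independent of $|\Omega|$, $H$, $h$, and $\delta$, and since $\tau \in (0, 1/N]$ with $N = N_c \leq 4$ is fixed, $\gamma \in (0,1)$ is a fixed constant.

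Second, I would estimate the threshold. Inserting the bounds $c_1 \approx 1$ and $c_2 \lesssim 1/\delta^2$ into
\begin{equation*}
\epsilon = \frac{4 c_2 |\Omega|}{\kappa \sqrt{c_1 (c_1 + \kappa^{-2})}}
\end{equation*}
immediately yields $\epsilon \lesssim |\Omega|/\delta^2$, as the denominator is bounded below by a positive constant depending only on $\kappa$. Setting the constant prefactor of $\gamma^n$ in Theorem~\ref{Thm:pseudo} equal to $c := \zeta_0 - \epsilon$ then puts the inequality in the exact form required by Definition~\ref{Def:pseudo}, completing the argument.

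There is essentially no obstacle here: the corollary is a direct specialization of the abstract pseudo-linear estimate to the concrete one-level overlapping decomposition, and all the work has already been done in Theorem~\ref{Thm:pseudo} (which provides the dichotomy between rate and threshold) and Lemma~\ref{Lem:1L} (which supplies the explicit dependence of $c_1$, $c_2$ on $\delta$ via the partition-of-unity interpolation estimate of Proposition~\ref{Prop:pou}). The only mild care needed is to note that $c_1$ is bounded both above \emph{and} below, so that the $\kappa$-dependent factors in $\gamma$ and in the denominator of $\epsilon$ remain uniformly controlled; this is immediate from the construction $c_1 \approx 1$.
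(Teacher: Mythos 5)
Your proposal is correct and takes essentially the same route as the paper, whose entire proof of Corollary~\ref{Cor:1L} is the one-line combination of Theorem~\ref{Thm:pseudo} with Lemma~\ref{Lem:1L} --- precisely your substitution of $c_1 \approx 1$ and $c_2 \lesssim 1/\delta^2$ into the abstract pseudo-linear estimate, with the correct observation that $\gamma$ then depends only on $\tau$ and $\kappa$ while the threshold is $\lesssim |\Omega|/\delta^2$. The only detail left implicit is that the requirement $\zeta_n \to 0$ in Definition~\ref{Def:pseudo} is supplied by the $O(1/n)$ bound of Theorem~\ref{Thm:rate}, which is immediate.
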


From Corollary~\ref{Cor:1L}, we can deduce several notable facts about Algorithm~\ref{Alg:ASM}.
As $\epsilon \lesssim |\Omega| / \delta^2$, the proposed DDM converges as fast as a linear convergent algorithm until the energy error becomes very small if $\delta$ is largely chosen.
Indeed, we will see in Section~\ref{Sec:Applications} that the energy error decreases linearly to the machine error if $\delta$ is chosen such that $|\Omega|^{1/2}/\delta$ is less than about $2^7$.
Moreover, since $\gamma$ does not depend on $|\Omega|$, $H$, $h$ or $\delta$, the linear convergence rate of Algorithm~\ref{Alg:ASM} which dominates the convergence behavior is the same regardless of $|\Omega |$, $\delta$, and the number of subdomains.
To the best of our knowledge, such an observation is new in the field of DDMs.
Usually, the linear convergence rate of additive Schwarz methods depends on $\delta$; see~\cite{Tai:2003} for example.
However, in our case, the value of $\delta$ affects only the threshold $\epsilon$ but not the rate $\gamma$.

% How to solve local problems
In the DDM described above, local problems in $\Omega_s'$~($s = 1, \dots, \N )$ has the following general form:
\begin{equation}
\label{local1}
\min_{\r_s \in Y_h (\Omega_s '), \q + (R^s)^* \r_s \in C } \F \left( \q + (R^s)^*\r_s \right),
\end{equation}
where $\q \in Y_h$, $Y_h (\Omega_s ')$ is defined in the same manner as~\eqref{1L}, and $(R^s)^*$:~$Y_h (\Omega_s ') \rightarrow Y_h$ is the natural extension operator.
Let $\p_s = \r_s + R^s \q$. Then~\eqref{local1} is equivalent to
\begin{equation}
\label{local2}
\min_{\p_s \in C^s} \left\{ \F_s (\p_s ) := F^* \left( \div (R^s)^* \p_s + g_s \right) \right\},
\end{equation}
where $C^s$ is the subset of $Y_h (\Omega_s ')$ defined by
\begin{equation*}
C^s = \left\{ \p_s \in Y_h (\Omega_s ') : |(\p_s)_e| \leq 1 \quad \forall e \in \E_h \textrm{ such that $e$ is in the interior of $\Omega_s '$} \right\}
\end{equation*}
and
\begin{equation*}
g_s = \div (I - (R^s)^*R^s) \q.
\end{equation*}
Existing state-of-the-art solvers for~\eqref{d_dual_model}~(see~\cite{CP:2016}) can be utilized to solve~\eqref{local2}; we have
\begin{equation*}
\F_s '(\p_s ) = R^s \div^* \left( (F^*)' \left( \div (R^s)^* \p_s + g_s \right) \right).
\end{equation*}

% Remark on a decomposition using the Helmholtz decomposition
\begin{remark}
\label{Rem:Helmholtz}
For unconstrained, strongly convex, vector-valued problems such as the $\mathrm{grad}$-$\div$ problem, one can obtain a stable decomposition such that $c_1$ is dependent on $\delta$ and $c_2 = 0$ by using the discrete Helmholtz decomposition~(see, e.g.,~\cite[Lemma~5.8]{Oh:2013}).
In this case, a linear convergence rate depending on $\delta$ is obtained by the same argument as Theorem~\ref{Thm:pseudo}.
However, it seems that such a stable decomposition is not available in our case: constrained and non-strongly convex problem; see Table~\ref{Table:difficult}.
Numerical experiments presented in Section~\ref{Sec:Applications} will show the following phenomena: Algorithm~\ref{Alg:ASM} converges not linearly but pseudo-linearly, i.e., the convergence rate deteriorates when $\F(\p^{(n)}) - \F(\p^* )$ becomes sufficiently small, and the linearly convergent part of Algorithm~\ref{Alg:ASM} is not dependent on $\delta$.
\end{remark}

% Section: Applications
\section{Applications}
\label{Sec:Applications}
% Figure: Test images
\begin{figure}[]
\centering
\subfloat[][Peppers $512\times 512$]{ \includegraphics[height=3.8cm]{./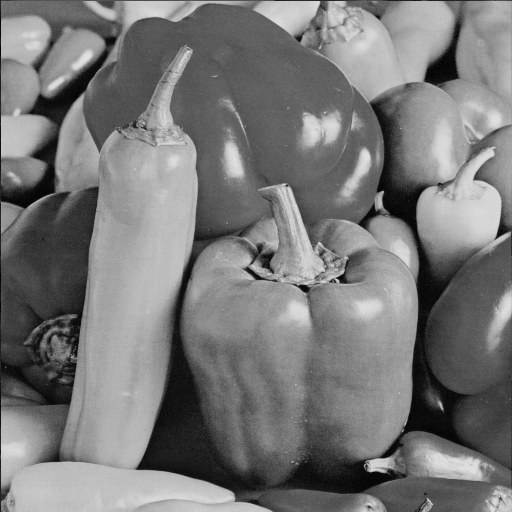} } \quad
\subfloat[][Noisy image~(PSNR: 19.11)]{ \includegraphics[height=3.8cm]{./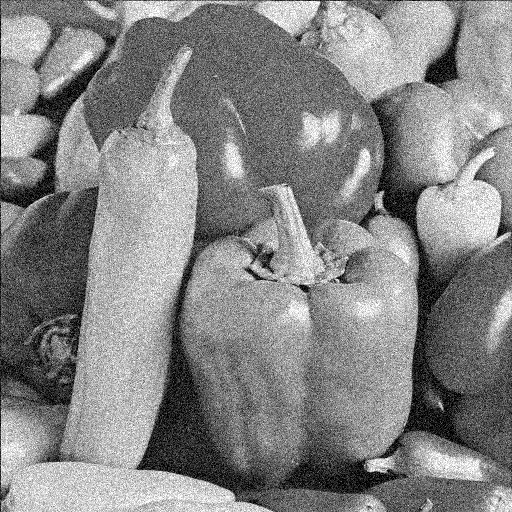} } \quad
\subfloat[][ROF, $\N = 16 \times 16$ (PSNR: 24.41)]{ \includegraphics[height=3.8cm]{./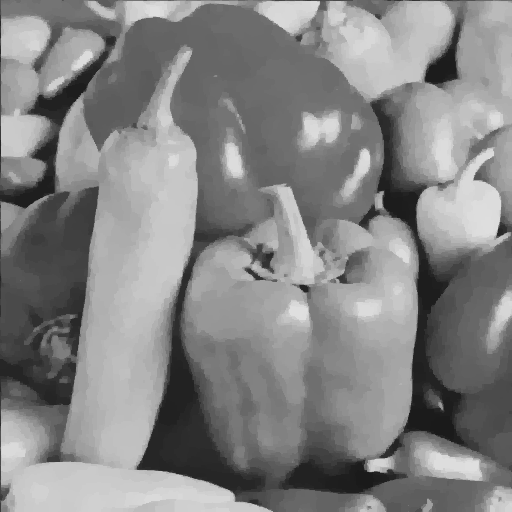} } \\
\subfloat[][Cameraman $2048 \times 2048$]{ \includegraphics[height=3.8cm]{./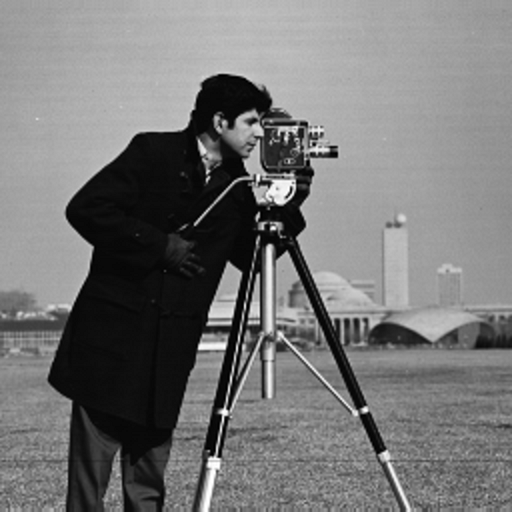} } \quad
\subfloat[][Noisy image~(PSNR: 19.17)]{ \includegraphics[height=3.8cm]{./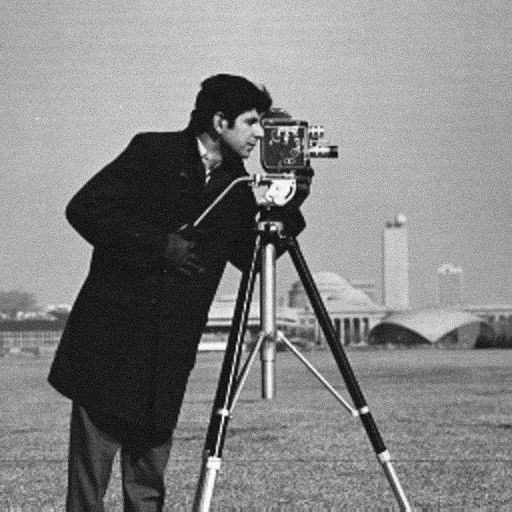} } \quad
\subfloat[][ROF, $\N = 16 \times 16$ (PSNR: 25.35)]{ \includegraphics[height=3.8cm]{./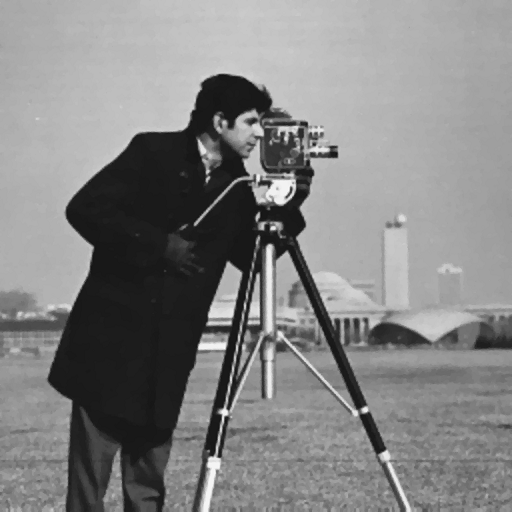} }
\caption{Test images and their results of Algorithm~\ref{Alg:ASM} applied to~\eqref{ROF_App} for $\N = 16 \times 16$ with $d/\delta = 2^6$.}
\label{Fig:test}
\end{figure}

In this section, we introduce several applications of the proposed method.
We also provide numerical experiments which support our theoretical results presented above.

All algorithms were implemented in C with MPI and performed on a computer cluster composed of seven machines, where each machine is equipped with two Intel Xeon SP-6148 CPUs~(2.4GHz, 20C) and 192GB~RAM.
Two test images ``Peppers $512 \times 512$'' and ``Cameraman $2048 \times 2048$'' that we used in our experiments are displayed in Figures~\ref{Fig:test}(a) and~(d).
As a measurement of the quality of image restoration, we provide the PSNR~(peak signal-to-noise ratio); the PSNR of a corrupted image $u \in X_h$ with respect to the original clean image $u_{\mathrm{orig}} \in X_h$ is defined by
\begin{equation*}
\mathrm{PSNR}(u) = 10 \log_{10} \left( \frac{\mathrm{MAX}^2 |\Omega|}{\| u - u_{\mathrm{orig}} \|^2}\right),
\end{equation*}
where $\mathrm{MAX} = 1$ is the maximum possible pixel value of the image.
In the following, we take the side length of elements $h=1$ and denote the side length of $\Omega$ by $d$, i.e., $d = |\Omega|^{1/2}$ for square images such as Figure~\ref{Fig:test}.
The scaled energy error $\alpha (\F(\p^{(n)}) - \F(\p^*))$ of the $n$th iterate $\p^{(n)}$ is denoted by $\zeta_n$, where the minimum energy $\F(\p^* )$ was computed by $10^6$ iterations of FISTA~\cite{BT:2009}.

% Subsection: ROF model
\subsection{The Rudin--Osher--Fatemi model}
% Figure: Dependency on delta
\begin{figure}[]
\centering
\subfloat[][Peppers $512 \times 512$, $\log$-$\log$ plot]{ \includegraphics[height=4.2cm]{./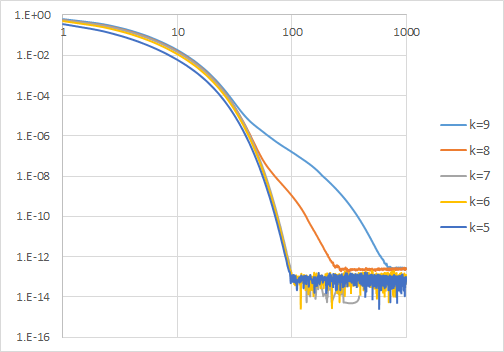} } \quad
\subfloat[][Peppers $512 \times 512$, normal-$\log$ plot]{ \includegraphics[height=4.2cm]{./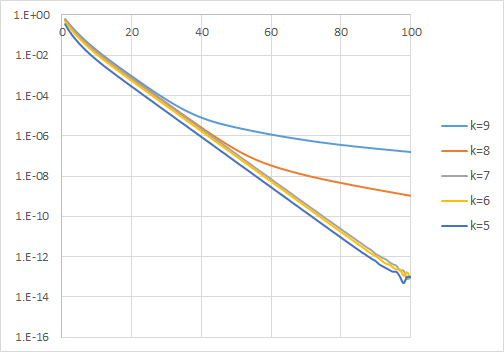} } \\
\subfloat[][Cameraman $2048 \times 2048$, $\log$-$\log$ plot]{ \includegraphics[height=4.2cm]{./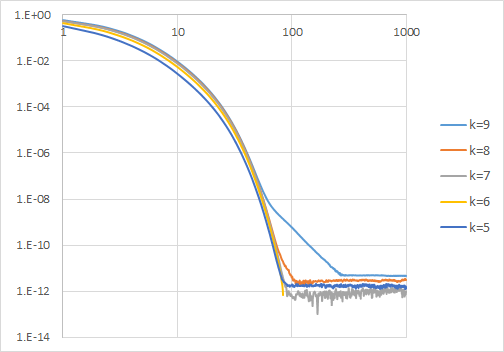} } \quad
\subfloat[][Cameraman $2048 \times 2048$, normal-$\log$ plot]{ \includegraphics[height=4.2cm]{./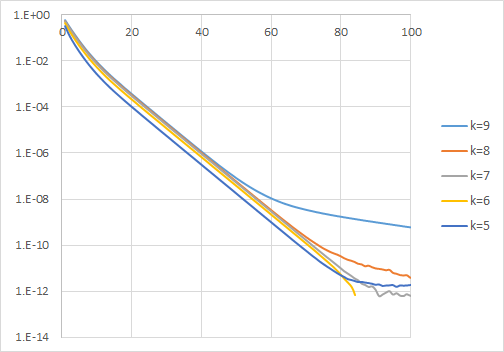} }
\caption{Decay of the relative energy error $\zeta_n / \zeta_0$ of Algorithm~\ref{Alg:ASM} applied to~\eqref{dual_ROF_App} for $d/\delta = 2^k$~($k=5,6,\dots,9$) with $\N = 8 \times 8$.}
\label{Fig:delta_ROF}
\end{figure}

% Figure: Dependency on N
\begin{figure}[]
\centering
\subfloat[][Peppers $512 \times 512$, $\log$-$\log$ plot]{ \includegraphics[height=4.2cm]{./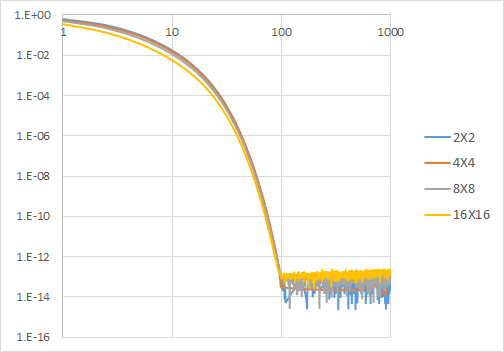} } \quad
\subfloat[][Peppers $512 \times 512$, normal-$\log$ plot]{ \includegraphics[height=4.2cm]{./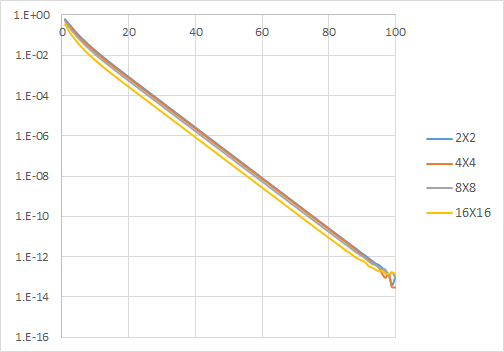} } \\
\subfloat[][Cameraman $2048 \times 2048$, $\log$-$\log$ plot]{ \includegraphics[height=4.2cm]{./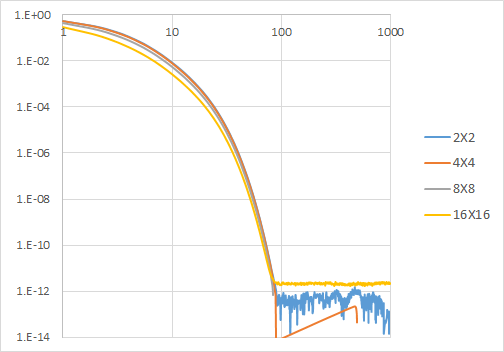} } \quad
\subfloat[][Cameraman $2048 \times 2048$, normal-$\log$ plot]{ \includegraphics[height=4.2cm]{./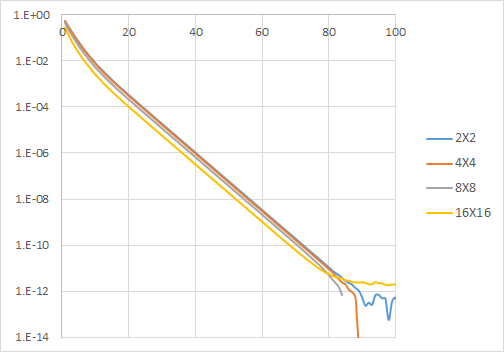} }
\caption{Decay of the relative energy error $\zeta_n / \zeta_0$ of Algorithm~\ref{Alg:ASM} applied to~\eqref{dual_ROF_App} for $\N = 2\times 2, \dots, 16 \times 16$ with $d/\delta = 2^6$.}
\label{Fig:N_ROF}
\end{figure}

The Fenchel--Rockafellar dual problem of the discrete ROF model~\eqref{ROF_App} is stated as
\begin{equation}
\label{dual_ROF_App}
\min_{\p \in C} \left\{ \F (\p) := \frac{1}{2\lambda} \| \div \p + \lambda f \|^2 \right\},
\end{equation}
and one can obtain the Frech\'{e}t derivative of $\F$ as
\begin{equation*}
\F'(\p) = \frac{1}{\lambda} \div^* \left( \div \p + \lambda f \right).
\end{equation*}
The projection onto $C$ can be easily computed by the pointwise Euclidean projection~\cite{LPP:2019}.
Therefore,~\eqref{dual_ROF_App} can be solved efficiently by, e.g., FISTA~\cite{BT:2009}.
Note that for the case of~\eqref{dual_ROF_App}, the primal-dual relation~\eqref{pd_equiv} reduces to the following:
\begin{equation*}
u^* = f + \frac{1}{\lambda} \div \p^* ,
\end{equation*}
where $u^* \in X_h$ solves~\eqref{ROF_App}.

For our experiments, test images shown in Figures~\ref{Fig:test}(a) and~(d) were corrupted by additive Gaussian noise with mean 0 and variance 0.05; see Figures~\ref{Fig:test}(b) and~(e).
The parameter $\lambda$ in~\eqref{ROF_App} was chosen by $\lambda = 10$.
In Algorithm~\ref{Alg:ASM}, we set $\tau = 1/4$.
Local problems in $\Omega_s'$, $s=1, \dots, \N$, were solved by FISTA~\cite{BT:2009} with $L = 8/\lambda$ and the stop criterion
\begin{equation}
\label{stop}
\frac{\| \div (\r_s^{(n+1)} - \r_s^{(n)}) \|^2}{|\Omega_s'|} \leq 10^{-18} \quad\textrm{or}\quad n = 1000.
\end{equation}
We note that the parameter selection $L = 8/\lambda$ is due to Proposition~\ref{Prop:inverse}.
The image results for the case $16\times 16$ are given in Figures~\ref{Fig:test}(c) and~(f), and they show no trace on the subdomain boundaries.

First, we observe how the convergence rate of Algorithm~\ref{Alg:ASM} is affected by $d/\delta$.
Figure~\ref{Fig:delta_ROF} shows the decay of the relative energy error $\zeta_n / \zeta_0$ for $d/\delta = 2^k$~($k=5,6,\dots,9$) when the number of subdomains is fixed by $\N = 8 \times 8$.
As Figures~\ref{Fig:delta_ROF}(a) and~(c) show, the threshold of the pseudo-linear convergence decreases as $d/\delta$ decreases, which verifies Corollary~\ref{Cor:1L}.
Furthermore, in the cases when $d/\delta \leq 2^7$, the threshold is so small that the behavior of Algorithm~\ref{Alg:ASM} is like linearly convergent algorithms.
Thus, the proposed method is as efficient as linearly convergent algorithms in practice.
We also observe from Figures~\ref{Fig:delta_ROF}(b) and~(d) that the linear convergence rate of Algorithm~\ref{Alg:ASM} is independent of $\delta$ as noted in Corollary~\ref{Cor:1L}.

Next, we consider the performance of the proposed DDM with respect to the number of subdomains $\N$.
Figure~\ref{Fig:N_ROF} shows the decay of $\zeta_n / \zeta_0$ when $\N$ varies from $2\times 2$ to $16 \times 16$ with $d/\delta = 2^6$.
We readily see that the convergence behavior of Algorithm~\ref{Alg:ASM} is almost the same regardless of $\N$.
Hence, we conclude that the convergence rate of Algorithm~\ref{Alg:ASM} does not depend on $\N$.

% Figure: Comparison with other methods
\begin{figure}[]
\centering
\subfloat[][Peppers $512 \times 512$]{ \includegraphics[height=4.2cm]{./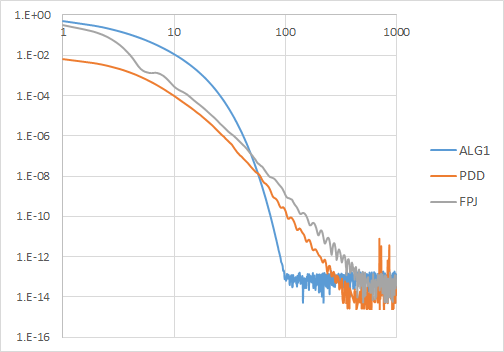} } \quad
\subfloat[][Cameraman $2048 \times 2048$]{ \includegraphics[height=4.2cm]{./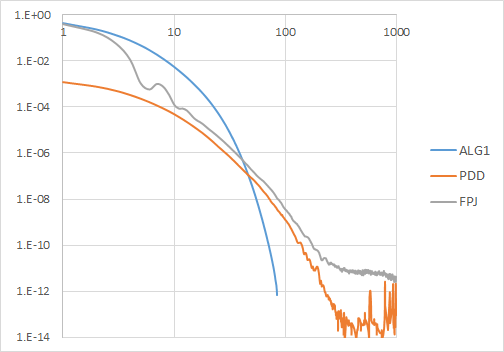} } 
\caption{Decay of the relative energy error $\zeta_n / \zeta_0$ of various DDMs applied to~\eqref{dual_ROF_App}, $\N = 8\times 8$.}
\label{Fig:comp}
\end{figure}

Finally, we compare the convergence behavior of the proposed method with two recently developed DDMs for the ROF model~\cite{LPP:2019,LP:2019b}.
The following algorithms were used in our experiments:
\begin{itemize}
\item ALG1: Algorithm~\ref{Alg:ASM}, $\N = 8 \times 8$, $d/\delta = 2^6$.
\item PDD: Primal DDM proposed in~\cite{LPP:2019}, $\N = 8 \times 8$, $L = 4$.
\item FPJ: Fast pre-relaxed block Jacobi method proposed in~\cite{LP:2019b}, $\N = 8 \times 8$.
\end{itemize}
As shown in Figure~\ref{Fig:comp}, as the number of iterations increases, the convergence rate of ALG1 becomes eventually faster than those of PDD and FPJ, which were proven to be $O(1/n^2)$-convergent.
We note that numerical results that verify the superior convergence properties of PDD and FPJ compared to existing $O(1/n)$-convergent DDMs were presented in~\cite{LPP:2019,LP:2019b}.

% Remark: FDM-FEM equivalence
\begin{remark}
\label{Rem:FDM}
Even though the proposed method is based on finite element discretizations, a direct comparison with methods based on finite difference discretizations such as FPJ is possible in virtue of the equivalence relation presented in~\cite[Theorem~2.3]{LPP:2019}. 
\end{remark}

% Subsection: The TV-H^-1 model
\subsection{The $TV$-$H^{-1}$ model}
% Figure: Dependency on delta
\begin{figure}[]
\centering
\subfloat[][Peppers $512 \times 512$, $\log$-$\log$ plot]{ \includegraphics[height=4.2cm]{./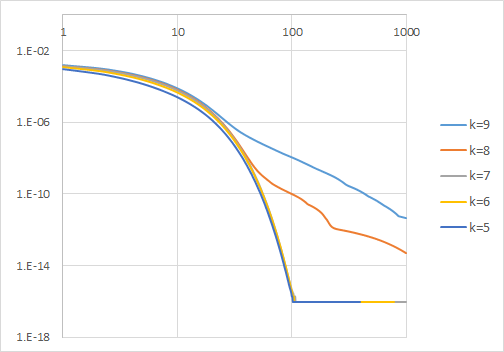} } \quad
\subfloat[][Peppers $512 \times 512$, normal-$\log$ plot]{ \includegraphics[height=4.2cm]{./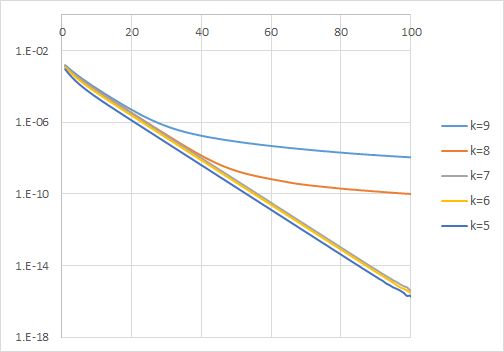} } \\
\subfloat[][Cameraman $2048 \times 2048$, $\log$-$\log$ plot]{ \includegraphics[height=4.2cm]{./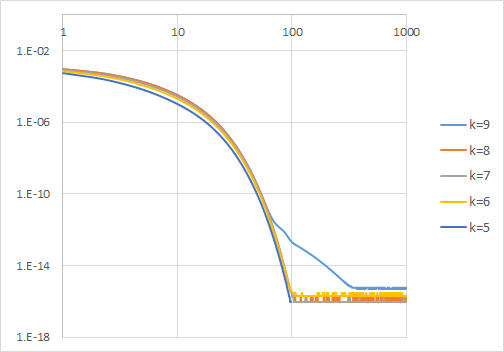} } \quad
\subfloat[][Cameraman $2048 \times 2048$, normal-$\log$ plot]{ \includegraphics[height=4.2cm]{./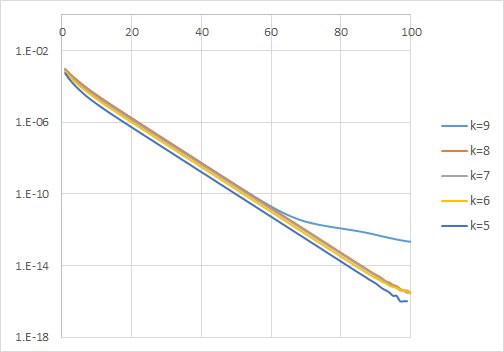} }
\caption{Decay of the relative energy error $\zeta_n / \zeta_0$ of Algorithm~\ref{Alg:ASM} applied to~\eqref{dual_K_App} for $d/\delta = 2^k$~($k=5,6,\dots,9$) with $\N = 8 \times 8$.}
\label{Fig:delta_K}
\end{figure}

% Figure: Dependency on N
\begin{figure}[]
\centering
\subfloat[][Peppers $512 \times 512$, $\log$-$\log$ plot]{ \includegraphics[height=4.2cm]{./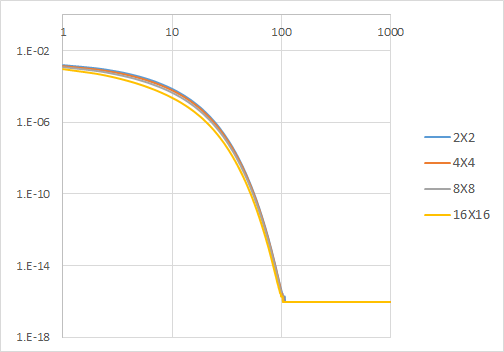} } \quad
\subfloat[][Peppers $512 \times 512$, normal-$\log$ plot]{ \includegraphics[height=4.2cm]{./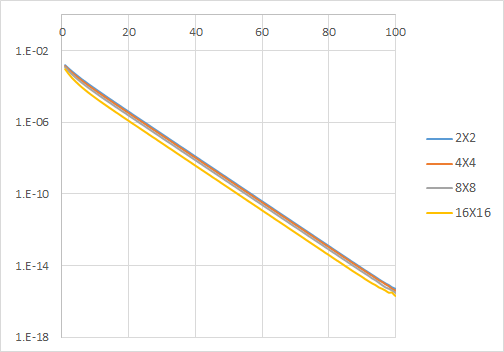} } \\
\subfloat[][Cameraman $2048 \times 2048$, $\log$-$\log$ plot]{ \includegraphics[height=4.2cm]{./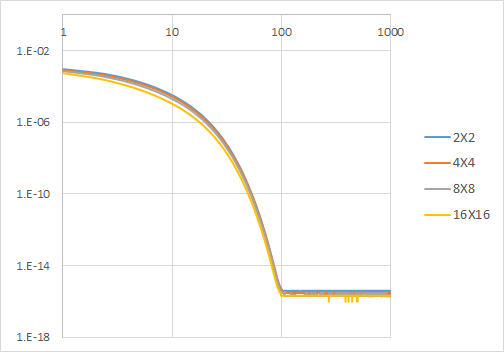} } \quad
\subfloat[][Cameraman $2048 \times 2048$, normal-$\log$ plot]{ \includegraphics[height=4.2cm]{./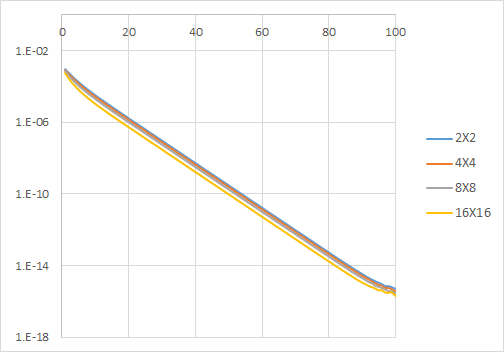} }
\caption{Decay of the relative energy error $\zeta_n / \zeta_0$ of Algorithm~\ref{Alg:ASM} applied to~\eqref{dual_K_App} for $\N = 2\times 2, \dots, 16 \times 16$ with $d/\delta = 2^6$.}
\label{Fig:N_K}
\end{figure}

Now, we consider the discrete $TV$-$H^{-1}$ model~\eqref{K_App}.
The dual problem of~\eqref{K_App} is presented as
\begin{equation}
\label{dual_K_App}
\min_{\p \in C} \left\{ \F (\p) := \frac{1}{2\lambda} \| \div \p \|_K^2 + \left< f, \div \p \right> \right\},
\end{equation}
where $\| v \|_K = \left< Kv, v \right>^{1/2}$ for $v \in X_h$.
The Frech\'{e}t derivative $\F' (\p)$ can be easily computed by
\begin{equation*}
\F' (\p) = \frac{1}{\lambda} \div^* (K \div \p + \lambda f).
\end{equation*}
If we have a solution $\p^* \in Y_h$ of~\eqref{dual_K_App}, then a solution $u^* \in X_h$ of~\eqref{K_App} can be obtained by
\begin{equation*}
u^* = f + \frac{1}{\lambda}K \div \p^* .
\end{equation*}

Now, we present the numerical results of Algorithm~\ref{Alg:ASM} for~\eqref{dual_K_App}.
The corrupted test images Figures~\ref{Fig:test}(b) and (e) are used as $f$ in~\eqref{dual_K_App}.
We set $\lambda = 10$.
In Algorithm~\ref{Alg:ASM}, the parameter $\tau$ is chosen by $\tau = 1/4$ and local problems in $\Omega_s'$, $s=1, \dots, \N$, were solved by FISTA~\cite{BT:2009} with $L=64/\lambda$ and the stop criterion~\eqref{stop}.
The parameter selection $L=64/\lambda$ is derived by Proposition~\ref{Prop:inverse} and the Gershgorin circle theorem for $K$~\cite{LeVeque:2007}.

Figure~\ref{Fig:delta_K} shows the decay of the relative energy error $\zeta_n / \zeta_0$ for various values of $d/\delta$ when $\N = 8 \times 8$.
We observe the same dependency of the convergence rate on $d/\delta$ as the ROF case: Algorithm~\ref{Alg:ASM} behaves as a linearly convergent algorithm if $d/\delta \leq 2^7$, and the rate of linear convergence is independent of $\delta$.
As Figure~\ref{Fig:N_K} shows, the dependency of the convergence rate of Algorithm~\ref{Alg:ASM} is independent of $\N$; the convergence rates when $\N = 2\times 2 , \cdots, 16 \times 16$ are almost the same.
In conclusion, Corollary~\ref{Cor:1L} is verified for the $TV$-$H^{-1}$ model, as well as the ROF model.

It is interesting to observe that the pseudo-linear convergence of Algorithm~\ref{Alg:ASM} is not contaminated even in the case of the large condition number $\kappa$.
While the condition number of~\eqref{K_App} is much larger than the one of~\eqref{ROF_App} in general, the pseudo-linear convergence is evident for both problems.
The threshold of the pseudo-linear convergence presented in Theorem~\ref{Thm:pseudo} has an upper bound independent of $\kappa$ as follows:
\begin{equation*}
\frac{4c_2 |\Omega|}{\kappa \sqrt{c_1 (c_1 + \kappa^{-2})}} \leq \frac{4c_2 |\Omega|}{c_1}.
\end{equation*}
Therefore, one can conclude that this observation is indeed reflected in Theorem~\ref{Thm:pseudo}.

% Section: Conclusion
\section{Conclusion}
\label{Sec:Conclusion}
We proposed an additive Schwarz method based on an overlapping domain decomposition for total variation minimization.
Differently from the existing work~\cite{CTWY:2015}, we showed that our method is applicable to not only the ROF model but also more general total variation minimization problems.
A novel technique using a descent rule for the convergence analysis of the additive Schwarz method was presented.
With this technique, we obtained the convergence rate of the proposed method as well as the dependency of the rate on the condition number of the model problem.
In addition, we showed the pseudo-linear convergence property of the proposed method, in which the convergence behavior of the proposed method is like linearly convergent algorithms if the overlapping width $\delta$ is large.
Numerical experiments verified our theoretical results.

Recently, the acceleration technique proposed in~\cite{BT:2009} was successfully applied to nonoverlapping DDMs for the ROF model and accelerated methods were developed~\cite{LPP:2019,LP:2019b}.
However, it is still open that how to adopt the acceleration technique to overlapping DDMs for general total variation minimization.

As a final remark, we note that the convergence analysis in this paper can be easily applied  to either a continuous setting or a finite difference discretization with slight modification.

\section*{Acknowledgments}
The author would like to thank Professor Chang-Ock Lee for insightful discussions and comments.

\bibliographystyle{siam}
\bibliography{refs_Schwarz_ROF}

\end{document}